\DeclareMathAlphabet{\mathpzc}{OT1}{pzc}{m}{it}
\newtheorem{theorem}{Theorem}[section]
\newtheorem{lemma}[theorem]{Lemma}
\newtheorem{proposition}[theorem]{Proposition}
\newtheorem{conjecture}[theorem]{Conjecture}
\theoremstyle{definition}
\theoremstyle{remark}
\numberwithin{equation}{theorem}
\title{Seymour's second-neighborhood conjecture from a different perspective}
\author{Farid Bouya}
\address{Department of Mathematics\\Louisiana State University\\Baton Rouge, LA 70803, USA}
\email{fbouya1@math.lsu.edu}
\author{Bogdan Oporowski}
\address{Department of Mathematics\\Louisiana State University\\Baton Rouge, LA 70803, USA}
\email{bogdan@math.lsu.edu}
\date{\today}
\def\i4c{internally $4$-connected}
\def\int{\mathrm{int}}
\def\le{\leqslant}
\def\ge{\geqslant}
\def\trans{^\intercal}
\def\i4c{internally $4$-connected}
\def\'{$'$}
\renewcommand\hat{\widehat}
\def\i4c{internally 4-connected}
\begin{document}

\begin{abstract}
  Seymour's Second-Neighborhood Conjecture states that every directed graph whose underlying
  graph is simple has at least one vertex $v$ such that the number of vertices of out-distance $2$
  from $v$ is at least as large as the number of vertices of out-distance $1$ from it. 
  We present alternative statements of the conjecture in the language of linear algebra.
\end{abstract}

\maketitle

\section{Introduction and Basic Definitions}

  In this paper, all directed graphs, or digraphs for short, have underlying graphs that are simple,
  that is, with no loops and no multiple edges.
  Let $D$ be a digraph and let $u$ and $v$ be vertices of $D$.
  We write $d(u,v)$ to denote the length of the shortest directed path from $u$ to $v$;
  if no such path exists, then we put $d(u,v) = \infty$.
  Since we focus on vertices of out-distance one or two from a particular vertex $v$ of $D$,
  we set up the following notation.
  \begin{equation*}
    \begin{aligned}[c]
      N^{+}(v) & = \{ u \in V(D) \mid d(v,u) = 1 \}, \\
      N^{++}(v) & = \{ u \in V(D) \mid d(v,u) = 2 \}, \\
      N^{-}(v) & = \{ u \in V(D) \mid d(u,v) = 1 \}, \\
      N^{--}(v) & = \{ u \in V(D) \mid d(u,v) = 2 \}, 
    \end{aligned}
    \qquad
    \begin{aligned}[c]
      d^{+}(v) & = |N^{+}(v)|, \\
      d^{++}(v) & =|N^{++}(v)|, \\
      d^{-}(v) & = |N^{-}(v)|, \\
      d^{--}(v) & = |N^{--}(v)|.
    \end{aligned}
  \end{equation*}
  Each of the symbols defined above may also have a subscript indicating to which digraph it refers.
  Let $\overleftarrow{D}$ be the digraph obtained from $D$ by reversing the direction on all its edges,
  so that $d_{D}^+(v) = d_{\overleftarrow{D}}^-(v)$.
  The original form of Seymour's Second-Neighborhood Conjecture (SNC) is therefore stated as:

  \begin{conjecture}[SNC] \label{snc:orig}
    Every digraph has a vertex $v$ for which $d^{+}(v) \le d^{++}(v)$.
  \end{conjecture}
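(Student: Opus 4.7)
The plan is to recast the conjecture in matrix language, in keeping with the paper's theme, and then try to extract the required vertex from a global inequality. Let $A \in \{0,1\}^{n \times n}$ be the adjacency matrix of $D$, so that $d^{+}(v)$ is the row sum $(A\mathbf{1})_v$. Define $B \in \{0,1\}^{n \times n}$ by $B_{vu} = 1$ exactly when $u \in N^{++}(v)$; concretely, $B$ is obtained from $A^{2}$ by zeroing the diagonal and all entries $(v,u)$ with $A_{vu} = 1$, and then replacing every remaining positive entry by $1$. Conjecture~\ref{snc:orig} is then equivalent to the assertion that the vector $(B - A)\mathbf{1}$ has at least one nonnegative coordinate.

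My first move would be a global averaging (trace-type) argument: if $\mathbf{1}\trans(B - A)\mathbf{1} \ge 0$, then some row sum of $B - A$ is nonnegative and we are done. This reduces matters to showing that the number of ordered pairs at directed distance exactly $2$ is at least $|E(D)|$. Because $D$ is simple, every arc $uv$ with $d^{+}(u) \ge 2$ produces a length-$2$ walk out of $u$, so the combinatorial task becomes one of choosing endpoints of such walks injectively, then subtracting off the walks that terminate at $u$ itself or on a direct out-neighbor of $u$.

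The main obstacle will be precisely this injectivity. The matrix $B$ is \emph{not} a polynomial in $A$; it is a Boolean thresholding of $A^{2}$ masked by $I + A$, and many length-$2$ walks can collapse either onto existing arcs (transitive triangles) or onto very few distance-$2$ targets (large common out-neighborhoods). Hence no purely algebraic identity seems to control $\mathbf{1}\trans B\mathbf{1}$ from below, and the simplicity hypothesis must enter in a genuinely structural way, since the conjecture fails for multigraphs. If the global bound collapses, I would fall back on induction on $|V(D)|$ by deleting a vertex of minimum out-degree and attempting to promote a second-neighborhood-friendly vertex of $D - v$ to one of $D$, or on a fractional attack in the spirit of Fisher's tournament proof: place an optimal probability distribution $p$ on $V(D)$ and argue that $\mathbb{E}_{v \sim p}[d^{++}(v) - d^{+}(v)] \ge 0$. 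The linear-algebraic reformulations developed later in the paper should be the right vehicle for turning such a fractional witness into an integral one.
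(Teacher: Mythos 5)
The statement you are asked about is Seymour's Second-Neighborhood Conjecture itself, which is a well-known open problem; the paper does not prove it and does not claim to. It only proves that Conjecture~\ref{snc:orig} is \emph{equivalent} to several linear-algebraic reformulations (Theorem~\ref{t:equiv}). So there is no ``paper's own proof'' to compare against, and your proposal would need to settle the open problem to succeed. It does not: it is an outline of strategies, each of which you yourself flag as incomplete, and none of which is carried through.

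Concretely, the first step already fails. The global averaging inequality $\mathbf{1}\trans(B-A)\mathbf{1}\ge 0$ is false in general: for the transitive tournament on three vertices with arcs $v_1v_2$, $v_1v_3$, $v_2v_3$, every vertex has $d^{++}=0$ while $|E(D)|=3$, so $\mathbf{1}\trans B\mathbf{1}=0<3=\mathbf{1}\trans A\mathbf{1}$, even though SNC holds there (take the sink). This is exactly the collapse you anticipate (transitive triangles absorbing length-$2$ walks into existing arcs), but anticipating the obstacle is not the same as overcoming it. The fallback ideas --- induction by deleting a minimum-out-degree vertex, or a fractional/probabilistic argument in the style of Fisher --- are only named; the induction is known to be delicate because deleting a vertex can destroy second neighborhoods of the remaining vertices, and Fisher's median-order/probabilistic method is specific to tournaments and has not been extended to general digraphs. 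Your reformulation in terms of $A$, $B$, and $S_D=A-B$ (up to sign conventions) does match the spirit of the paper's Conjectures~\ref{snc:matrix}--\ref{snc:inverse}, but the paper uses these only to restate the problem, not to resolve it.
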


  We will adopt some of the notation common in linear algebra.
  In particular, $\mathbf{0}$ will denote a vector or a matrix consisting of all zeros,
  and similarly, $\mathbf{1}$ will denote a vector or a matrix consisting of all ones.
  The identity matrix will be denoted by $I$.
  Even though the dimensions of these matrices or vectors will not be stated explicitly, they may be
  easily inferred from the context.

  When vectors are represented in the matrix form, they will be understood as column vectors,
  but to save space, they will be written as transpositions of row vectors.
  Let $\mathbf{u}=(u_1, u_2, \dots, u_n)\trans$ and let $v=(v_1, v_2, \dots, v_n)\trans$.
  When we express a numerical relation between vectors, such as $\mathbf{u} \le\mathbf{v}$,
  we mean that $u_i\le v_i$ for all $i$ in $\{1,2,\dots,n\}$.
  The relations $<$, $\ge$, $>$, and $=$ are understood in a similar way.
  However, the negated relations, such as $\not\le$, $\not<$, $\not\ge$, $\not>$, and $\not=$
  are understood in a different way.
  When we write, for example, $\mathbf{u}\not\le\mathbf{v}$ we mean that $u_i>v_i$ for at
  least one $i$ in $\{1,2\dots,n\}$, and so for vectors with more than one component,
  the inequality $\mathbf{u}\le \mathbf{v}$ is not equivalent to $\mathbf{u}\not>\mathbf{v}$.
  The same idea applies to all other negated relations.

  A \emph{weight function} on a digraph $D$ is a function $w: V(D) \to [0,\infty)$.
  If the vertices of $D$ are enumerated as $v_1$, $v_2$, \ldots,~$v_n$, then we
  can treat $w$ as a vector:
  $\mathbf{w}=[w(v_1), w(v_2), \dots, w(v_n)]\trans$.
  In fact, we will often blur the distinction between the values of a weight function and the
  components of the vector it determines, and write $\mathbf{w}(v)$ instead of $w(v)$.
  We will extend this notation to sets of vertices and write
  $\mathbf{w}(S)$ to mean $\sum_{v\in S}\mathbf{w}(v)$ for a subset $S$ of $V(D)$.

  In order to write SNC in terms of matrices, we define the \emph{second-neighborhood matrix of $D$} as
  an $n\times n$ matrix $S_D$ whose entries are denoted by $s_{ij}$ and defined as follows:
  \begin{align*}
    s_{ij} = 
    \begin{cases}
      1 \qquad & d(v_i,v_j) = 1,\\
      -1 \qquad & d(v_i,v_j) = 2,\\
      0 \qquad & \text{otherwise.}
    \end{cases}
  \end{align*}
  Note that $S_D\trans$ is the second-neighborhood matrix of $\overleftarrow{D}$.

  In this paper, we have adopted main proof techniques from a paper of Fisher~\cite{Fisher}.

 \section{Conjectures}\label{s:conjectures}

  The main purpose of this paper is to present several statements in the language of linear algebra, each of which
  is equivalent to SNC, in the hope that the tools of linear algebra may yield themselves to attacking the conjecture.
  These statements are the following:

  \begin{conjecture}\label{snc:matrix}
    Every digraph $D$ satisfies $S_D \mathbf{1} \not > \mathbf{0}$.
  \end{conjecture}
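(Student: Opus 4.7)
The plan is to show that Conjecture \ref{snc:matrix} is merely a rewording of Conjecture \ref{snc:orig} (SNC) once the matrix--vector product $S_D\mathbf{1}$ is unpacked entry by entry.

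First I would compute the $i$-th entry of $S_D\mathbf{1}$. By the definition of $S_D$, we have $(S_D\mathbf{1})_i = \sum_{j=1}^n s_{ij}$. Partitioning the sum according to the three cases in the definition of $s_{ij}$, the contribution of $+1$ terms counts the vertices $v_j$ with $d(v_i,v_j)=1$, and the contribution of $-1$ terms counts (with a minus sign) the vertices $v_j$ with $d(v_i,v_j)=2$. Hence
\[
  (S_D\mathbf{1})_i \;=\; |N^{+}(v_i)| - |N^{++}(v_i)| \;=\; d^{+}(v_i) - d^{++}(v_i).
\]

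Next I would invoke the notational convention set out in the introduction: the negated relation $S_D\mathbf{1}\not>\mathbf{0}$ means, by definition, that $(S_D\mathbf{1})_i \le 0$ for at least one index $i\in\{1,2,\dots,n\}$. Substituting the expression computed above, this is equivalent to the existence of some $i$ with $d^{+}(v_i)-d^{++}(v_i)\le 0$, i.e.\ $d^{+}(v_i)\le d^{++}(v_i)$. This is precisely the statement of Conjecture \ref{snc:orig} for the vertex $v=v_i$, so the two conjectures are logically equivalent.

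There is no substantive obstacle here; the entire argument is bookkeeping, and the only point where care is needed is to remember that $\not>$ in the paper's convention does \emph{not} mean ``strictly less in every coordinate'' but rather ``fails to be strictly greater in some coordinate,'' which is exactly what matches the existential quantifier in SNC. Once that convention is respected, the equivalence is immediate from the coordinate-wise computation above.
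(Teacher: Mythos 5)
Your computation is correct and is exactly the ``evident'' argument that the paper omits: the statement in question is an open conjecture (equivalent to SNC), and what you have actually proved is Proposition~\ref{p:orig-matrix}, the equivalence of Conjectures~\ref{snc:orig} and~\ref{snc:matrix}, which the paper states without proof. Your coordinate-wise unpacking of $(S_D\mathbf{1})_i = d^{+}(v_i)-d^{++}(v_i)$ and your reading of the $\not>$ convention both match the paper's intent, so there is nothing to add.
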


  \begin{conjecture}\label{snc:weight}
    Every digraph $D$ and every weight vector $\mathbf{w}$ on $D$ satisfy $S_D\mathbf{w}\not>\mathbf{0}$.
  \end{conjecture}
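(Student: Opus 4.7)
The plan is to show that Conjecture \ref{snc:weight} is equivalent to SNC. One direction is immediate: setting $\mathbf{w} = \mathbf{1}$ in Conjecture \ref{snc:weight} gives precisely Conjecture \ref{snc:matrix}, which is a direct rewriting of SNC. The content of the claim is the converse implication.

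Assuming SNC, I would argue by contradiction. Suppose $(D, \mathbf{w})$ is a counterexample to Conjecture \ref{snc:weight}, so $S_D \mathbf{w} > \mathbf{0}$. Since strict positivity is an open condition in $\mathbf{w}$, I would first perturb $\mathbf{w}$ to have strictly positive rational entries while preserving $S_D \mathbf{w} > \mathbf{0}$, then clear denominators to obtain positive integer weights; the conclusion is unchanged under positive scaling of $\mathbf{w}$. I would then construct a blowup $D'$ of $D$ by replacing each vertex $v_i$ with an independent set of $w_i$ copies $v_i^{(1)}, \dots, v_i^{(w_i)}$, and for each arc from $v_i$ to $v_j$ in $D$ including every arc from $v_i^{(a)}$ to $v_j^{(b)}$. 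Since the underlying graph of $D$ is simple, so is that of $D'$, and SNC applies to $D'$.

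The key computation is that for every copy $v_i^{(a)}$ one has $d^+_{D'}(v_i^{(a)}) = \mathbf{w}(N^+_D(v_i))$ and $d^{++}_{D'}(v_i^{(a)}) = \mathbf{w}(N^{++}_D(v_i))$. The first identity is a direct count of copies. For the second, a copy $v_i^{(c)}$ with $c \ne a$ could enter $N^{++}_{D'}(v_i^{(a)})$ only through an intermediate $v_j^{(b)}$ realizing a $2$-cycle $v_i \to v_j \to v_i$ in $D$; the simple-underlying-graph hypothesis rules this out, so no within-group copy contributes, and the count reduces to $\mathbf{w}(N^{++}_D(v_i))$. Given both identities, the standing assumption $S_D \mathbf{w} > \mathbf{0}$ forces $d^+_{D'}(v_i^{(a)}) > d^{++}_{D'}(v_i^{(a)})$ at every vertex of $D'$, contradicting SNC applied to $D'$.

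I expect the main obstacle to be verifying this second identity cleanly, specifically confirming that the absence of arcs between copies of the same vertex in $D'$ does not accidentally produce spurious length-$2$ paths; the no-$2$-cycle structure of $D$ is precisely what makes the count work. A secondary concern is the approximation step from arbitrary nonnegative real weights to positive integer weights, which is routine but must be carried out so as to preserve the strict inequality $S_D \mathbf{w} > \mathbf{0}$ throughout.
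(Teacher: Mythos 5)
Your proposal is correct and follows essentially the same route as the paper's own treatment in Proposition~\ref{p:matrix-weight}: since the conjecture is open, one can only establish equivalence with SNC, and you do so by the same rational perturbation, integer scaling, and independent-set blowup, with the key count $d^{+}_{D'}(v_i^{(a)})=\mathbf{w}(N^+_D(v_i))$ and $d^{++}_{D'}(v_i^{(a)})=\mathbf{w}(N^{++}_D(v_i))$ matching the paper's observation that the components of $S_{D^*}\mathbf{1}$ reproduce those of $S_D\mathbf{u}$. Your explicit verification that the simplicity of the underlying graph (hence the absence of $2$-cycles) prevents spurious within-group distance-$2$ vertices is a detail the paper leaves implicit, but the argument is the same.
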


  \begin{conjecture}\label{snc:score-w}
    For every digraph $D$ there is a non-zero weight vector $\mathbf{w}$ with
    $S_D\mathbf{w}\le\mathbf{0}$.
  \end{conjecture}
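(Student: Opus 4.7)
The plan is to establish Conjecture \ref{snc:score-w} as equivalent to Conjecture \ref{snc:weight}, whose equivalence to SNC I would prove separately. The key tool will be von Neumann's minimax theorem applied to the bilinear form $\mathbf{q}\trans S_D \mathbf{p}$, combined with the identity $S_D\trans = S_{\overleftarrow{D}}$ recorded in the introduction.

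I would first record the easier direction, \ref{snc:score-w} $\Rightarrow$ \ref{snc:weight}. Arguing contrapositively, suppose some digraph $D$ and some $\mathbf{w}' \ge \mathbf{0}$ violate \ref{snc:weight}, so $S_D \mathbf{w}' > \mathbf{0}$. For any non-zero $\mathbf{w} \ge \mathbf{0}$ on the vertex set of $\overleftarrow{D}$, the scalar $\mathbf{w}\trans(S_D \mathbf{w}')$ is strictly positive, and rewriting it as $(S_{\overleftarrow{D}}\mathbf{w})\trans\mathbf{w}'$ with $\mathbf{w}' \ge \mathbf{0}$ forces $S_{\overleftarrow{D}}\mathbf{w}$ to have at least one positive coordinate. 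Thus no non-zero $\mathbf{w}$ witnesses \ref{snc:score-w} for $\overleftarrow{D}$.

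The converse, \ref{snc:weight} $\Rightarrow$ \ref{snc:score-w}, is the content specific to the final statement. I would consider the zero-sum game with payoff matrix $S_D$, in which the row player chooses a probability vector $\mathbf{q}$ to maximize, and the column player chooses $\mathbf{p}$ to minimize, the value $\mathbf{q}\trans S_D \mathbf{p}$. By the minimax theorem there is a common value $v$ attained by optimal strategies $\mathbf{p}^*$ and $\mathbf{q}^*$. If $v>0$, then $\mathbf{q}^{*\trans} S_D \mathbf{p}>0$ for every probability $\mathbf{p}$, and taking $\mathbf{p}$ to be each standard basis vector in turn forces $S_{\overleftarrow{D}}\mathbf{q}^* = S_D\trans\mathbf{q}^* > \mathbf{0}$, contradicting \ref{snc:weight} applied to $\overleftarrow{D}$. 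Hence $v\le 0$, and then $\mathbf{q}\trans S_D\mathbf{p}^*\le 0$ for every probability $\mathbf{q}$, which by the same substitution yields $S_D\mathbf{p}^*\le\mathbf{0}$; the probability vector $\mathbf{p}^*$ is the required non-zero weight.

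The main obstacle, in my view, is not the minimax step above --- which is a textbook duality argument --- but rather the preliminary equivalence of \ref{snc:matrix} with \ref{snc:weight}. That step appears to require a Fisher-style vertex blow-up that must respect the simple-graph hypothesis on digraphs, and this is where any genuine combinatorial content of the chain of equivalences lives; the passage to \ref{snc:score-w}, once \ref{snc:weight} is in hand, is essentially linear-algebraic bookkeeping.
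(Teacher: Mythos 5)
Your argument is correct, but it reaches the equivalence of Conjecture~\ref{snc:score-w} with Conjecture~\ref{snc:weight} by a genuinely different route than the paper. The paper's Proposition~\ref{p:weight-score} augments $S_D$ with an identity block and a row of ones, and then walks through a chain of nine equivalent systems of linear inequalities whose one nontrivial link is Farkas' Lemma; you instead play the zero-sum game with payoff matrix $S_D$ and split on the sign of its value $v$, invoking von Neumann's minimax theorem. The two are the same LP duality in different clothing, and your version is in fact closer in spirit to Fisher's tournament argument, which the authors cite as the source of their techniques; the Farkas chain is more mechanical but makes the bookkeeping completely explicit. Both of your directions check out: when $v>0$, optimality of $\mathbf{q}^*$ against every pure strategy $\mathbf{p}=\mathbf{e}_j$ does give $S_D\trans\mathbf{q}^*=S_{\overleftarrow{D}}\mathbf{q}^*>\mathbf{0}$, and when $v\le 0$ the probability vector $\mathbf{p}^*$ is automatically a non-zero weight vector with $S_D\mathbf{p}^*\le\mathbf{0}$. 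One point worth making explicit: the paper's proposition also records the sharper per-digraph statement that $D$ is a counterexample to Conjecture~\ref{snc:score-w} if and only if $\overleftarrow{D}$ is a counterexample to Conjecture~\ref{snc:weight}, and it is this correspondence (not merely the equivalence of the two universally quantified conjectures) that is used later in Proposition~\ref{p:x21-x'23} and Lemma~\ref{l:min}. Your argument delivers it --- the $v>0$ case sends a counterexample $D$ to Conjecture~\ref{snc:score-w} to the counterexample $\overleftarrow{D}$ to Conjecture~\ref{snc:weight}, and your first paragraph gives the converse --- but you should state it. Your closing assessment also matches the paper: the combinatorial content lives in the blow-up of Proposition~\ref{p:matrix-weight}, which preserves simplicity because each class $V_i$ is independent and $D$ carries at most one arc between $v_i$ and $v_j$.
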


  \begin{conjecture}\label{snc:score-v}
    For every digraph $D$, there is a vector $\mathbf{v}$ (not necessarily a weight vector)
    with at least one positive component and such that $S_D\mathbf{v}\le\mathbf{0}$.
  \end{conjecture}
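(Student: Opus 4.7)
The plan is to prove Conjecture~\ref{snc:score-v} by establishing the equivalence chain
\[ \ref{snc:orig} \iff \ref{snc:matrix} \iff \ref{snc:weight} \iff \ref{snc:score-w} \iff \ref{snc:score-v}, \]
so that Conjecture~\ref{snc:score-v} rises or falls with the original Second-Neighborhood Conjecture. The bi-implication \ref{snc:orig}$\iff$\ref{snc:matrix} is immediate since $(S_D\mathbf{1})_i = d^+(v_i)-d^{++}(v_i)$. For \ref{snc:matrix}$\iff$\ref{snc:weight}, I adopt Fisher's vertex-replication technique~\cite{Fisher}: given a weight vector $\mathbf{w}$ with rational entries, clear denominators and replace each vertex $v_i$ by $\mathbf{w}(v_i)$ copies (each copy inheriting all of $v_i$'s arcs); applying Conjecture~\ref{snc:matrix} to this blow-up produces a copy at which the deficit $d^+-d^{++}$ is non-positive, which translates to $(S_D\mathbf{w})_i\le 0$ in $D$. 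Irrational weights follow by approximation and continuity.

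For \ref{snc:weight}$\iff$\ref{snc:score-w}, I apply Farkas' lemma: infeasibility of the system $\mathbf{w}\ge\mathbf{0},\ \mathbf{1}\trans\mathbf{w}\ge 1,\ S_D\mathbf{w}\le\mathbf{0}$ is equivalent to the existence of $\mathbf{y}\ge\mathbf{0}$ with $S_D\trans\mathbf{y}>\mathbf{0}$ strictly, which is precisely Conjecture~\ref{snc:weight} failing for $\overleftarrow{D}$; since $D\mapsto\overleftarrow{D}$ is an involution on digraphs, the universal statements coincide. The direction \ref{snc:score-w}$\Rightarrow$\ref{snc:score-v} is immediate: a non-zero weight vector has at least one positive component.

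The crux, and the main obstacle I expect, is \ref{snc:score-v}$\Rightarrow$\ref{snc:score-w}. A Farkas-type analysis characterizes the failures: Conjecture~\ref{snc:score-v} fails for $D$ iff every non-negative vector lies in the cone $S_D\trans\mathbb{R}_+^n$ (equivalently, each $\mathbf{e}_i$ does), while Conjecture~\ref{snc:score-w} fails iff $S_D\trans\mathbb{R}_+^n$ contains merely some strictly positive vector. Thus, per-digraph, Conjecture~\ref{snc:score-v} is strictly weaker, and the universal equivalence must use a reduction between digraphs. I plan a contrapositive argument: given $D$ for which Conjecture~\ref{snc:score-w} fails, with Farkas witness $\mathbf{y}\ge\mathbf{0}$, $S_D\trans\mathbf{y}>\mathbf{0}$, construct the Fisher blow-up $D'$ of $\overleftarrow{D}$ by the integer weights obtained from $\mathbf{y}$, and then exploit the automorphism group of $D'$ (copies of each original vertex form an orbit) together with the strict positivity $S_{D'}\mathbf{1}>\mathbf{0}$ (which follows directly from the blow-up computation) to lift these features to the stronger cone containment $\mathbb{R}_+^{n'}\subseteq S_{D'}\trans\mathbb{R}_+^{n'}$, thereby exhibiting a digraph for which Conjecture~\ref{snc:score-v} fails. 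The delicate technical point is precisely this lifting from one positive combination of columns of $S_{D'}\trans$ to one per coordinate, for which the symmetry of the blow-up provides the needed flexibility.
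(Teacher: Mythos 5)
There is a fundamental problem before any technical issue: the statement you are proving is Conjecture~\ref{snc:score-v}, which the paper does not prove and cannot prove --- it is one of several reformulations of Seymour's Second-Neighborhood Conjecture, and the paper's actual result is Theorem~\ref{t:equiv}, namely that all of these reformulations are equivalent. Your proposal is, in substance, an attempted proof of Theorem~\ref{t:equiv}; even if every link in your chain were completed, you would have shown only that Conjecture~\ref{snc:score-v} stands or falls with Conjecture~\ref{snc:orig}, which remains open. So the proposal cannot succeed as a proof of the stated conjecture.

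Judged instead as a proof of the equivalence, the first four links match the paper (the blow-up for Conjecture~\ref{snc:matrix}$\iff$\ref{snc:weight}, Farkas for Conjecture~\ref{snc:weight}$\iff$\ref{snc:score-w}), but the crux, Conjecture~\ref{snc:score-v}$\Rightarrow$\ref{snc:score-w}, has a genuine gap. Your own Farkas analysis shows that defeating Conjecture~\ref{snc:score-v} for a digraph $E$ on $m$ vertices requires \emph{every} standard basis vector to lie in the cone $S_E\trans\mathbb{R}_+^{m}$, whereas the blow-up $D'$ only hands you the single strictly positive vector $S_{D'}\mathbf{1}$, and that vector lies in the cone generated by the columns of $S_{D'}$, not of $S_{D'}\trans$ (a transpose mismatch your sketch glosses over). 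Averaging over the automorphisms of the blow-up only produces further vectors constant on the classes $V_i$; it gives no access to individual coordinates, and nothing in the sketch excludes, say, $S_{D'}$ being singular with a kernel vector having a positive component, in which case $D'$ and $\overleftarrow{D'}$ satisfy Conjecture~\ref{snc:score-v} outright. The paper closes this gap quite differently: it takes a \emph{minimal} counterexample $D$ to Conjecture~\ref{snc:score-w}, uses the vertex-deleted subdigraphs to assemble a non-negative matrix $\hat W$ for which $C=S_D\hat W$ has positive diagonal and non-positive off-diagonal entries, and then runs a column Gauss--Jordan elimination using only non-negative column operations to conclude that $S_D^{-1}$ exists and is non-negative (Lemma~\ref{l:x22-x'23}); by Proposition~\ref{p:sv-in} such a $D$ violates Conjectures~\ref{snc:inverse} and~\ref{snc:score-v}. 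Minimality is essential to that argument, and a blow-up $D'$, being a counterexample to Conjecture~\ref{snc:matrix}, is never the reverse of a minimal counterexample to Conjecture~\ref{snc:score-w} (this is the content of Lemma~\ref{l:min} and Theorem~\ref{t:conj}), so your proposed lifting would need an entirely different mechanism.
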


  \begin{conjecture}\label{snc:inverse}
    There is no digraph $D$ such that $S_D^{-1}\ge\mathbf{0}$.
  \end{conjecture}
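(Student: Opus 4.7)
The plan is to argue by contradiction: suppose some digraph $D$ has an invertible second-neighborhood matrix with $S_D^{-1} \ge \mathbf{0}$. The first move is to pin down a canonical non-negative certificate. Set $\mathbf{x} = S_D^{-1} \mathbf{1}$, so that $\mathbf{x} \ge \mathbf{0}$ and $S_D \mathbf{x} = \mathbf{1}$. Reading off the $v$-th coordinate yields the uniform identity
\[
  \mathbf{x}(N^+(v)) - \mathbf{x}(N^{++}(v)) = 1 \qquad \text{for every } v \in V(D),
\]
so $\mathbf{x}$ is a weight function in which the first neighborhood beats the second at every vertex by exactly the same margin. Before attacking this, I would pass to a vertex-minimal counterexample, so that no proper induced subdigraph $D'$ satisfies $S_{D'}^{-1} \ge \mathbf{0}$. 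Such minimality should force $\mathbf{x}$ to be strictly positive and the underlying graph to be strongly connected.

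I would next exploit global consequences of the vertex identity. Summing it against an arbitrary non-negative test weight $f$ and swapping the order of summation produces
\[
  \sum_{u \in V(D)} \mathbf{x}(u) \Bigl( \sum_{v \in N^-(u)} f(v) - \sum_{v \in N^{--}(u)} f(v) \Bigr) = \sum_{v \in V(D)} f(v).
\]
I would instantiate this with two strategic choices. First, $f = \mathbf{x}$ itself, which yields the quadratic identity $\mathbf{x}\trans(S_D + S_D\trans)\mathbf{x} = 2\,\mathbf{1}\trans\mathbf{x}$. Second, $f = \mathbf{z}$, the Perron eigenvector of the non-negative matrix $S_D^{-1}$ with eigenvalue $\lambda_{\max} > 0$; then $S_D \mathbf{z} = \lambda_{\max}^{-1}\mathbf{z}$ gives the twin identity $\mathbf{z}(N^+(v)) - \mathbf{z}(N^{++}(v)) = \mathbf{z}(v)/\lambda_{\max}$ at each vertex. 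Combined, these supply a pair of scalar identities tying $\mathbf{x}$, $\mathbf{z}$, the in-degree sequence, and the second-in-degree sequence.

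The hard part will be converting those identities into a combinatorial impossibility. This is where the difficulty of the conjecture concentrates: uniform weighted domination of second neighborhoods by first neighborhoods is precisely what SNC forbids. I would model the endgame after Fisher's tournament proof, where a median-order argument localizes the contradiction at an extremal vertex. Adapted here, I would consider a vertex $v^*$ extremizing a ratio such as $\mathbf{x}(v)/\mathbf{x}(N^+(v))$ and try to show that the $-1$ entries of $S_D$, which occupy only second-neighborhood positions, cannot simultaneously balance the defect equation at $v^*$ and propagate consistently through the strongly connected hull. This extremal step is the principal obstacle; even if it resists a direct attack, the scheme at least reduces the conjecture to a concrete combinatorial statement about weighted neighborhood overlaps in the non-negative preimage of $\mathbf{1}$ under $S_D$.
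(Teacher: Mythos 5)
There is a fundamental gap here, and it is worth being explicit about what it is. The statement you are trying to prove is Conjecture~\ref{snc:inverse}, which the paper does not prove and cannot prove: it is shown (via Propositions~\ref{p:sv-in}, \ref{p38}, and the chain of equivalences in Theorem~\ref{t:equiv}) to be \emph{equivalent} to Seymour's Second-Neighborhood Conjecture, which is open. Your own sketch concedes the point in its final sentence: the ``extremal step'' that would produce the contradiction is never carried out, and that step is not a technical loose end --- it is the entire content of SNC. Everything before it is correct but is essentially a restatement of the hypothesis: setting $\mathbf{x}=S_D^{-1}\mathbf{1}$ gives $\mathbf{x}\ge\mathbf{0}$ and $S_D\mathbf{x}=\mathbf{1}>\mathbf{0}$, which is exactly the assertion that $D$ is a counterexample to Conjecture~\ref{snc:weight}; your observation that ``uniform weighted domination of second neighborhoods by first neighborhoods is precisely what SNC forbids'' is the easy direction of the equivalence the paper already establishes, not progress toward a proof. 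The quadratic identity $\mathbf{x}\trans(S_D+S_D\trans)\mathbf{x}=2\,\mathbf{1}\trans\mathbf{x}$ and the Perron-eigenvector identity are both valid consequences, but neither has any known route to a contradiction, and you do not supply one.

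Two smaller issues in the parts you do argue. First, the claim that minimality forces $\mathbf{x}>\mathbf{0}$ is unnecessary (any row of $S_D^{-1}$ that summed to zero while the matrix is non-negative and invertible would have to be nonzero somewhere, so $S_D^{-1}\mathbf{1}>\mathbf{0}$ automatically), but the claim that minimality forces strong connectivity is asserted without justification and is not immediate for the inverse-matrix formulation --- in the paper the analogous fact is derived for minimal members of $\mathscr{X}_{\ref{snc:matrix}}$, a different set of counterexamples, and Theorem~\ref{t:conj} shows these sets genuinely differ in their minimal elements. Second, modeling the endgame on Fisher's argument is a reasonable instinct (the paper cites~\cite{Fisher} for its techniques), but Fisher's median-order machinery proves Dean's conjecture for \emph{tournaments}, where every pair of vertices is adjacent; it does not transfer to general digraphs, which is precisely why SNC remains open. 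As written, the proposal is a (correct) reduction of the conjecture to itself, not a proof.
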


  The first major result of this paper is the following:

  \begin{theorem} \label{t:equiv}
    Conjectures~\ref{snc:orig}, \ref{snc:matrix}, \ref{snc:weight}, \ref{snc:score-w}, \ref{snc:score-v}, and~\ref{snc:inverse} are equivalent.
  \end{theorem}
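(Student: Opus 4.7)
The $i$-th entry of $S_D\mathbf{1}$ equals $d^+(v_i)-d^{++}(v_i)$, so $\ref{snc:orig}\Leftrightarrow\ref{snc:matrix}$ is immediate. Two further implications are trivial: $\ref{snc:weight}\Rightarrow\ref{snc:matrix}$ (take $\mathbf{w}=\mathbf{1}$) and $\ref{snc:score-w}\Rightarrow\ref{snc:score-v}$ (a nonzero weight vector has a positive component). The per-digraph equivalence $\ref{snc:score-v}\Leftrightarrow\ref{snc:inverse}$ follows from a short Farkas-style argument: if $S_D$ is invertible with $S_D^{-1}\ge\mathbf{0}$, then $S_D\mathbf{v}\le\mathbf{0}$ forces $\mathbf{v}=S_D^{-1}(S_D\mathbf{v})\le\mathbf{0}$, precluding a positive component; conversely, if $S_D$ is singular I take a nonzero kernel vector (possibly negated to give a positive entry), and if $(S_D^{-1})_{kj}<0$ I let $\mathbf{v}=-S_D^{-1}\mathbf{e}_j$, so that $S_D\mathbf{v}=-\mathbf{e}_j\le\mathbf{0}$ and $v_k>0$.

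For $\ref{snc:weight}\Leftrightarrow\ref{snc:score-w}$ I apply Motzkin's Transposition Theorem. The negation of \ref{snc:weight} at $D$ is the solvability of $\mathbf{w}\ge\mathbf{0},\ S_D\mathbf{w}>\mathbf{0}$, and Motzkin says this primal system is infeasible exactly when there exists a nonzero $\mathbf{y}\ge\mathbf{0}$ with $S_D\trans\mathbf{y}\le\mathbf{0}$, i.e., when \ref{snc:score-w} holds for $\overleftarrow{D}$. Since $S_{\overleftarrow{D}}=S_D\trans$ and the class of digraphs is closed under reversal, the universal statements \ref{snc:weight} and \ref{snc:score-w} coincide; the same duality, applied to $\overleftarrow{D}$, ties the universal forms of \ref{snc:score-v} and \ref{snc:inverse} back to \ref{snc:score-w} and \ref{snc:weight}, closing the chain through the LP side.

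The remaining implication $\ref{snc:matrix}\Rightarrow\ref{snc:weight}$ is where I expect the main obstacle. I would argue the contrapositive: from a digraph $D$ with $\mathbf{w}\ge\mathbf{0}$ and $S_D\mathbf{w}>\mathbf{0}$, produce a digraph $D'$ with $S_{D'}\mathbf{1}>\mathbf{0}$. Since strict positivity is an open condition, one may perturb $\mathbf{w}$ to a rational vector and then clear denominators, so $\mathbf{w}$ may be taken integer-valued. The natural attempt is to replace each $v_i$ by $w_i$ twin copies inheriting the edges between classes. However, a direct calculation on small examples (e.g.\ the directed $3$-cycle with a chord) shows that two twins of $v_i$ can end up at distance two from each other through any directed triangle containing $v_i$, injecting a spurious term into $(S_{D'}\mathbf{1})_{v_i^{a}}$ that can even flip its sign. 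The delicate part of the proof therefore lies in designing the gadget---perhaps by supplementing each twin class with auxiliary vertices, or by choosing an internal orientation whose effect on $d^{+}-d^{++}$ cancels the spurious terms---so that $(S_{D'}\mathbf{1})_{v_i^{a}}=(S_D\mathbf{w})_i$ holds for every twin and the counterexample to \ref{snc:matrix} is manufactured cleanly.
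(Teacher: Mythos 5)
Your easy implications are fine, and your treatments of Conjecture~\ref{snc:score-v} versus Conjecture~\ref{snc:inverse} and of Conjecture~\ref{snc:weight} versus Conjecture~\ref{snc:score-w} (Motzkin in place of Farkas, applied through $\overleftarrow{D}$ via $S_{\overleftarrow{D}}=S_D\trans$) match the paper's Propositions in substance. But you have misplaced the difficulty, and your chain does not close. The genuine gap is the passage from Conjecture~\ref{snc:score-v} (equivalently~\ref{snc:inverse}) back to Conjecture~\ref{snc:score-w}. Your claim that ``the same duality \dots ties the universal forms of \ref{snc:score-v} and \ref{snc:inverse} back to \ref{snc:score-w} and \ref{snc:weight}'' is unjustified: Farkas/Motzkin duality applies to sign-constrained systems, whereas the vector $\mathbf{v}$ in Conjecture~\ref{snc:score-v} is free apart from having one positive component, so a digraph admitting no nonzero weight vector $\mathbf{w}\ge\mathbf{0}$ with $S_D\mathbf{w}\le\mathbf{0}$ may a priori still admit a mixed-sign $\mathbf{v}$ with $S_D\mathbf{v}\le\mathbf{0}$ and a positive entry; no transposition theorem converts one statement into the other. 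The paper closes this gap with a substantial argument (Lemma~\ref{l:x22-x'23}): take a \emph{minimal} counterexample $D$ to Conjecture~\ref{snc:score-w}, extend the witnessing weight vectors of the vertex-deleted subgraphs $D-v_i$ to the columns of a matrix $\hat{W}$, observe that $C=S_D\hat{W}$ has positive entries exactly on the diagonal, and run a column Gauss--Jordan elimination using only nonnegative multipliers (nonnegativity being forced by minimality, since a failure would yield a nonnegative combination of the $\mathbf{\hat{w}}_i$ witnessing Conjecture~\ref{snc:score-w} for $D$ itself); this produces a nonnegative $W'$ with $S_DW'=I$, so $D\in\mathscr{X}_{\ref{snc:inverse}}$ and Conjecture~\ref{snc:score-v} fails. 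Nothing in your outline supplies this step.

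Conversely, the step you flag as the main obstacle, Conjecture~\ref{snc:matrix} $\Rightarrow$ Conjecture~\ref{snc:weight}, is unproblematic and is exactly the paper's blow-up: after rationalizing and clearing denominators, replace each $v_i$ by $u(v_i)$ twins and join the classes completely along each arc of $D$. Your worry that twins can lie at out-distance two from each other is unfounded under the standing hypothesis that the underlying graph is simple: distance two between twins would require a digon in $D$, and a ``directed $3$-cycle with a chord'' already has a multiple edge in its underlying graph, so it is not an admissible example here. A directed triangle puts twins at distance three, which contributes nothing to $S_{D^*}\mathbf{1}$, and the identity $(S_{D^*}\mathbf{1})_{v}=(S_D\mathbf{u})_i$ for $v\in V_i$ holds with no correction terms.
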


  Proving some of the equivalences is significantly harder than proving others, and, indeed, some of these statements, such as
  Conjectures~\ref{snc:matrix} and~\ref{snc:weight} play only auxiliary roles in the arguments.
  The proof of this theorem will be presented in a series of propositions in future sections.

  If one, and thus all, of these conjectures fail, the sets of counterexamples may, and, in fact, do differ between some of them.
  When we compare potential counterexamples and use words like ``minimal'' or ``smaller'', we understand them in terms of the
  number of arcs.
  The fact that the sets of minimal counterexamples to Conjectures~\ref{snc:score-w},~\ref{snc:score-v}, and~\ref{snc:inverse} are the same can be easily
  seen from the proofs of the relevant equivalences.
  However, we find surprising the following:

  \begin{theorem}\label{t:conj}
    Every minimal counterexample to Conjecture~\ref{snc:score-w} is smaller than every minimal counterexample to Conjecture~\ref{snc:matrix}.
  \end{theorem}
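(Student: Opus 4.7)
The plan is to show that every minimal counterexample $D$ to Conjecture~\ref{snc:matrix} admits an arc $e\in E(D)$ such that $D\setminus e$ remains a counterexample to Conjecture~\ref{snc:weight}. Combined with the equivalence (established by a Farkas-type argument in the proof of Theorem~\ref{t:equiv}) that a digraph $H$ is a counterexample to~\ref{snc:score-w} if and only if $\overleftarrow{H}$ is a counterexample to~\ref{snc:weight}, this produces a counterexample to~\ref{snc:score-w}, namely $\overleftarrow{D\setminus e}$, with $|E(D)|-1$ arcs. Since every counterexample to~\ref{snc:matrix} is automatically a counterexample to~\ref{snc:weight} (take $\mathbf{w}=\mathbf{1}$), the stated strict inequality then follows.

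First I would carry out a local analysis of arc deletion. When an arc $e=uv$ is removed, distances can only increase, so $(S_{D\setminus e}\mathbf{1})_w\ge(S_D\mathbf{1})_w\ge 1$ for every $w\ne u$. At $u$ itself, the coordinate changes by $\alpha-\beta-1$, where $\alpha$ counts the vertices of $N^{++}_D(u)$ whose only length-$2$ walk from $u$ used $uv$, and $\beta\in\{0,1\}$ equals $1$ precisely when $v$ rejoins $N^{++}_{D\setminus e}(u)$ through another out-neighbor of $u$. Minimality of $D$ forces $(S_{D\setminus e}\mathbf{1})_u\le 0$ for every arc $e$, so $\alpha\le\beta$ and $(S_D\mathbf{1})_u\in\{1,2\}$ at every vertex. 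In addition, every vertex has $d^+_D(u)\ge 2$, since $d^+_D(u)=1$ with out-neighbor $v$ would (under the paper's no-$2$-cycle convention) force $v$ to have out-degree zero and hence $(S_D\mathbf{1})_v\le 0$, contradicting that $D$ is a counterexample to~\ref{snc:matrix}.

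A perturbation argument then handles most arcs. Whenever an arc $e=uv$ satisfies $(S_{D\setminus e}\mathbf{1})_u=0$ (which happens exactly when $\alpha=\beta$, or when $\alpha=0,\beta=1,(S_D\mathbf{1})_u=2$), the weight $\mathbf{w}=\mathbf{1}+t\mathbf{e}_y$ for any $y\in N^+_D(u)\setminus\{v\}$ and small $t\in(0,1)$ certifies $S_{D\setminus e}\mathbf{w}>\mathbf{0}$: the $u$-coordinate becomes $t$ and every other coordinate is at least $1-t$. The only configuration escaping this argument is the ``deficit $-1$'' case $\alpha=0,\beta=1,(S_D\mathbf{1})_u=1$, which produces $(S_{D\setminus e}\mathbf{1})_u=-1$.

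The main obstacle is to rule out digraphs $D$ in which \emph{every} arc lies in this deficit-$-1$ configuration. At such a vertex $u$ the conditions $\beta=1$ and $\alpha=0$ at every out-arc of $u$ impose that every element of $N^+_D(u)$ has an in-neighbor within $N^+_D(u)$ and that every element of $N^{++}_D(u)$ has at least two in-neighbors in $N^+_D(u)$. My plan is to exploit these strong local conditions---either by constructing a carefully chosen non-uniform weight $\mathbf{w}\ge\mathbf{0}$ that concentrates mass on appropriate out-neighbors in order to absorb the unit deficit at a chosen $u$ while preserving slack elsewhere, or by a counting argument showing the conditions cannot hold simultaneously at every vertex of a minimal counterexample---to produce the required arc deletion. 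Verifying one of these strategies is where the main technical difficulty of the proof lies.
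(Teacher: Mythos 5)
Your proposal contains a genuine gap, and you say so yourself: the case in which \emph{every} arc of the minimal counterexample $D$ to Conjecture~\ref{snc:matrix} sits in the ``deficit $-1$'' configuration is left entirely open, and that case is precisely where all the difficulty lives. Everything you do establish --- that deleting $uv$ leaves every coordinate of $S_{D\setminus uv}\mathbf{1}$ other than the $u$-th at least $1$, that minimality forces $(S_D\mathbf{1})_u\in\{1,2\}$ and $d^+(u)\ge 2$, and that a small perturbation $\mathbf{1}+t\mathbf{e}_y$ disposes of any arc whose deletion leaves the $u$-coordinate at $0$ --- is correct but elementary; it corresponds roughly to the paper's observation (borrowed from Brantner et al.) that $a(v)\in\{1,2\}$ and to the easy subcases $a'(y)\in\{0,1\}$ at the start of the proof of Lemma~\ref{l:min}. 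The paper then spends the entire remainder of that lemma killing the analogue of your residual case: it uses strong connectivity, the bound on minimum in-degree, a vertex of maximum in-degree, the structure of the sets $X$, $Z$, and $S$, and finally a vertex at in-distance three to reach a contradiction. You offer only two unverified strategies for this step, so the proof is not complete.

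There is also a structural reason to worry that your residual case is \emph{harder} in your framework than in the paper's. The paper argues by contradiction from the assumption that the digraph is a \emph{minimal member of} $\mathscr{X}_{\ref{snc:score-w}}$; that assumption supplies, for every vertex- or arc-deleted subgraph $D'$, a nonnegative weight vector $\mathbf{w}'$ with $S_{D'}\mathbf{w}'\le\mathbf{0}$, and these vectors are the engine of the final contradiction. Your setup assumes only minimality in $\mathscr{X}_{\ref{snc:matrix}}$, so the only weight you can harvest is $\mathbf{1}$ and its perturbations. Moreover, your intermediate target --- that some single arc deletion of $D$ itself remains a counterexample to Conjecture~\ref{snc:weight} --- is strictly stronger than what Theorem~\ref{t:conj} requires (the smaller counterexample need not be a subgraph of $D$), and it is not known to follow even from the truth of the theorem. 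I would recommend reorganizing the argument around minimality in $\mathscr{X}_{\ref{snc:score-w}}$, as in Lemma~\ref{l:min}, rather than trying to finish within the arc-deletion framework you have set up.
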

 
\section{Equivalences}

  We begin by addressing the equivalence of the first pair of the conjectures.
  We state it without proof, as it is evident.

  \begin{proposition}\label{p:orig-matrix}
    Conjectures~\ref{snc:orig} and~\ref{snc:matrix} are equivalent.
  \end{proposition}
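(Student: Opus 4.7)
The plan is to unpack the $i$-th component of $S_D\mathbf{1}$ directly from the definition of the second-neighborhood matrix, and then to translate the relation $S_D\mathbf{1}\not>\mathbf{0}$ by means of the paper's convention on negated vector inequalities. First I would enumerate $V(D)$ as $v_1,\dots,v_n$ and verify that
\[
  (S_D\mathbf{1})_i \;=\; \sum_{j=1}^n s_{ij} \;=\; \bigl|\{j : d(v_i,v_j)=1\}\bigr|-\bigl|\{j : d(v_i,v_j)=2\}\bigr| \;=\; d^+(v_i)-d^{++}(v_i),
\]
which follows at once from the three-case definition of the entries $s_{ij}$.

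Next I would invoke the convention laid out in the introduction: $S_D\mathbf{1}\not>\mathbf{0}$ asserts precisely that at least one component of the vector $S_D\mathbf{1}$ is non-positive. Combined with the computation above, this is exactly the assertion that there exists an index $i$ with $d^+(v_i)\le d^{++}(v_i)$, which is the conclusion of Conjecture~\ref{snc:orig}. Since each step is a straightforward rewriting rather than a one-way implication, the two conjectures are logically equivalent, and quantifying over all digraphs $D$ preserves this equivalence.

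There is no real obstacle. The only subtlety worth flagging is that one must respect the non-standard convention that $\mathbf{x}\not>\mathbf{0}$ is \emph{not} interpreted as the coordinatewise statement $\mathbf{x}\le\mathbf{0}$, but rather as the existential statement that some coordinate of $\mathbf{x}$ is $\le 0$. Once that reading is in place, the proof collapses to the single-line translation above, consistent with the authors' remark that the equivalence is evident.
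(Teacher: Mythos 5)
Your proof is correct and is exactly the ``evident'' argument the paper omits: computing $(S_D\mathbf{1})_i=d^+(v_i)-d^{++}(v_i)$ and reading $\not>$ as the existence of a non-positive component. Nothing further is needed.
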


  We proceed now to the equivalence of the next pair of conjectures.

  \begin{proposition}\label{p:matrix-weight}
    Conjectures~\ref{snc:matrix} and~\ref{snc:weight} are equivalent.
  \end{proposition}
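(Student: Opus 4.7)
The plan is to prove both directions, one of them being essentially immediate and the other needing a blow-up construction. For Conjecture~\ref{snc:weight} $\Rightarrow$ Conjecture~\ref{snc:matrix}, I would just observe that the all-ones vector is itself a weight vector, so the specialization $\mathbf{w}=\mathbf{1}$ of the former yields the latter. The real work is the other direction, which I would prove contrapositively: starting from a digraph $D$ and a weight vector $\mathbf{w}$ with $S_D \mathbf{w} > \mathbf{0}$, I would produce a digraph $D'$ with $S_{D'}\mathbf{1}>\mathbf{0}$, thereby violating Conjecture~\ref{snc:matrix}.

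The first step would be to reduce to integer weights: because $S_D \mathbf{w} > \mathbf{0}$ is an open condition on $\mathbf{w}$ and the rationals are dense in $[0,\infty)^n$, a small perturbation lets me assume $\mathbf{w}$ is rational, and then a suitable positive integer $k$ makes every $k w_i$ a nonnegative integer while preserving the strict inequality. The second step is the blow-up: replace each $v_i$ with a set $V_i$ of $k w_i$ new vertices (so $v_i$ with $w_i=0$ disappears); for every arc $v_i v_j$ of $D$ include all $|V_i|\cdot|V_j|$ arcs from $V_i$ to $V_j$ in $D'$; and inside each $V_i$ install arcs in both directions between every pair of distinct vertices. The underlying graph of the resulting digraph $D'$ is simple, and a short case analysis on where a potential second-out-neighbor can lie shows that for $u\in V_i$ the out-neighbors of $u$ in $D'$ consist of the other $k w_i - 1$ members of $V_i$ together with the blow-up of $N^+_D(v_i)$, while every second-out-neighbor lies inside the blow-up of $N^{++}_D(v_i)$.

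Combining these counts yields $(S_{D'}\mathbf{1})_u \ge (k w_i - 1) + k (S_D \mathbf{w})_i$, which is strictly positive because $k w_i \ge 1$ whenever $w_i > 0$ and $(S_D \mathbf{w})_i > 0$ by hypothesis; so $S_{D'}\mathbf{1} > \mathbf{0}$, as required. The main subtlety, and the reason for installing a bidirectional clique inside each $V_i$ rather than leaving it edgeless, is that when $v_i$ lies on a directed $2$-cycle of $D$ the distinct copies of $v_i$ would otherwise sit at out-distance exactly $2$ from each other in $D'$, contributing a harmful $-(k w_i - 1)$ term that can easily overwhelm $k(S_D \mathbf{w})_i$. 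The bidirectional-clique choice both eliminates that negative contribution (those copies are now at distance $1$) and adds the matching positive term to the out-degree, providing precisely the slack needed to make the inequality come out with the right sign.
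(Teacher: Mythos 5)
Your overall strategy is the same as the paper's: the easy direction via $\mathbf{w}=\mathbf{1}$, and the hard direction via rational approximation, scaling to integer weights, and a blow-up $D'$ in which each $v_i$ is replaced by $|V_i|=kw_i$ copies with all arcs from $V_i$ to $V_j$ for each arc $v_iv_j$ of $D$. But your one deliberate departure from that construction --- installing arcs in both directions between every pair of distinct vertices inside each $V_i$ --- is a genuine error. The paper's standing convention is that every digraph has a simple underlying graph, i.e., no loops and no multiple edges; a pair of opposite arcs $uu'$ and $u'u$ produces a double edge in the underlying graph, so your $D'$ is not a legal digraph in this setting, and exhibiting it does not refute Conjecture~\ref{snc:matrix}. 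This is not a pedantic point: the conjecture is simply false for digraphs with digons (two vertices joined by a digon already have $d^+(v)=1>0=d^{++}(v)$ at both vertices), so the whole framework collapses if digons are admitted. Your claim that ``the underlying graph of the resulting digraph $D'$ is simple'' is therefore false as stated.

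The subtlety you invoke to justify the cliques also does not exist: since $D$ itself has no digons, $D$ has no directed $2$-cycles, so for distinct $u,u'\in V_i$ any directed path from $u$ to $u'$ in the plain blow-up must leave $V_i$ to some $V_j$ (forcing $v_iv_j\in E(D)$) and return (forcing $v_jv_i\in E(D)$ at the second step), which would require a digon in $D$; hence $d_{D'}(u,u')\ge 3$ and the feared $-(kw_i-1)$ term never appears. Leaving each $V_i$ edgeless, as the paper does, gives $(S_{D'}\mathbf{1})_u = k(S_D\mathbf{w})_i>0$ exactly (the paper also perturbs to \emph{strictly} positive rationals so that no vertex disappears, which keeps the computation an equality rather than an inequality). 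So the fix is simply to delete the intra-$V_i$ arcs; with them, the proof does not go through.
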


  \begin{proof}
    It is clear that Conjecture~\ref{snc:weight} implies Conjecture~\ref{snc:matrix}.

    Suppose now that Conjecture~\ref{snc:weight} fails, and so there are a digraph $D$
    and a weight vector $\mathbf{w}$ on $D$ are such that $S_D\mathbf{w}>\mathbf{0}$.
    Since the set of positive rational numbers forms a dense subset of $[0,\infty)$, we may take a weight vector $\mathbf{w}'$
    sufficiently close to $\mathbf{w}$ so that the components of $\mathbf{w}'$
    are rational and positive, and $S_D\mathbf{w}'>\mathbf{0}$.
    By multiplying $\mathbf{w}'$ by a suitable integer, we obtain a weight
    vector $\mathbf{u}$ whose components are positive integers, and such that $S_D\mathbf{u}>\mathbf{0}$.

    We construct a digraph $D^*$ as follows.
    Enumerate the vertices of $D$ as $v_1$, $v_2$,~\dots,~$v_n$, and suppose that
    $\mathbf{u}=[u(v_1),u(v_2),\dots,u(v_n)]\trans$.
    For each $i$ in $\{1,2,\dots,n\}$, let $V_i$ be a set of $u(v_i)$ elements, and let $V(D^*)$ be the
    disjoint union of all $V_i$'s.
    For each directed edge $(v_i,v_j)$ of $D$, put into $D^*$ directed edges from each element of $V_i$ to each element of $V_j$.
    Let $S_{D^*}$ be the second-neighborhood matrix of $D^*$ and note that in the vector $S_{D^*}\mathbf{1}$, the component corresponding
    to a vertex $v$ of $D^*$ that lies in in some $V_i$ is equal to the component of $S_D\mathbf{u}$ corresponding to the vertex
    $v_i$ of $D$.
    Hence $S_{D^*}\mathbf{1}>\mathbf{0}$, and so $D^*$ is a counterexample to Conjecture~\ref{snc:matrix}.
  \end{proof}

  Our proof of the next equivalence will make use of a classical result in linear algebra, known as Farkas' Lemma, which is stated below.

  \begin{theorem}[Farkas' Lemma]\label{t:farkas}
    Let $M$ be an $(m \times n)$-matrix and let $\mathbf{b}$ be an $m$-dimensional vector.
    Then exactly one of the following statements holds.
    \begin{enumerate}
      \item There is an $n$-dimensional vector $\mathbf{x}$ such that $M\mathbf{x}=\mathbf{b}$ and $\mathbf{x}\ge\mathbf{0}$.
      \item There is an $m$-dimensional vector $\mathbf{y}$ such that $M\trans\mathbf{y}\ge\mathbf{0}$ and $\mathbf{b}\trans\mathbf{y}<0$.
    \end{enumerate}
  \end{theorem}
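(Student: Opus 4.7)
The plan is to derive Farkas' Lemma from the separating hyperplane theorem, applied to the finitely generated convex cone $C=\{M\mathbf{x} : \mathbf{x}\ge\mathbf{0}\}\subseteq\R^m$.

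First I would verify that the two alternatives cannot hold simultaneously. If $M\mathbf{x}=\mathbf{b}$ with $\mathbf{x}\ge\mathbf{0}$ while at the same time $M\trans\mathbf{y}\ge\mathbf{0}$ and $\mathbf{b}\trans\mathbf{y}<0$, then $0>\mathbf{b}\trans\mathbf{y}=(M\mathbf{x})\trans\mathbf{y}=\mathbf{x}\trans(M\trans\mathbf{y})\ge 0$, a contradiction. This direction is essentially a one-line computation and involves no content beyond nonnegativity.

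For the substantive direction, I would assume that (1) fails and produce the vector required in (2). The set $C$ above is a convex cone, and the main obstacle, as I expect it, is to show that $C$ is \emph{closed} in $\R^m$; this is precisely where the finite-generation structure is doing all the work, and the rest of the proof follows almost formally from it. The cleanest route I know is a Caratheodory-style argument: every element of $C$ can already be expressed as a nonnegative combination of at most $\rank(M)$ linearly independent columns of $M$, and for each such selection of columns the corresponding sub-cone is the image of a closed orthant under an injective linear map and hence closed; $C$ is then a union of finitely many such sub-cones, and so closed. An alternative route that sidesteps topology entirely is a Fourier--Motzkin induction on $n$, eliminating one variable at a time until the feasibility question becomes trivial.

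Once $C$ is closed and $\mathbf{b}\notin C$, the separating hyperplane theorem for closed convex sets provides a $\mathbf{y}\in\R^m$ with $\mathbf{y}\trans c\ge 0$ for every $c\in C$ and $\mathbf{y}\trans\mathbf{b}<0$. Specializing to $c=M\mathbf{e}_i$, which lies in $C$ since $\mathbf{e}_i\ge\mathbf{0}$, yields $(M\trans\mathbf{y})_i=\mathbf{y}\trans M\mathbf{e}_i\ge 0$ for every $i$, so $M\trans\mathbf{y}\ge\mathbf{0}$. This $\mathbf{y}$ is the certificate demanded by alternative~(2).
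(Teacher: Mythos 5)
The paper does not prove this statement at all: Farkas' Lemma is quoted as a classical black-box result and is only \emph{used} (in the equivalence of statements (5) and (6) in Proposition~\ref{p:weight-score}), so there is no in-paper argument to compare yours against. On its own merits, your proof is correct and is one of the standard routes: the easy direction is exactly the one-line computation $0>\mathbf{b}\trans\mathbf{y}=\mathbf{x}\trans(M\trans\mathbf{y})\ge 0$, and the substantive direction correctly identifies closedness of the finitely generated cone $C=\{M\mathbf{x}:\mathbf{x}\ge\mathbf{0}\}$ as the crux, established via the conic Carath\'eodory decomposition into finitely many closed sub-cones. One small step you elide: the generic separating hyperplane theorem for a closed convex set and an exterior point gives only $\mathbf{y}\trans\mathbf{b}<\alpha<\mathbf{y}\trans c$ for some scalar $\alpha$; to get the homogeneous form $\mathbf{y}\trans c\ge 0$ for all $c\in C$ together with $\mathbf{y}\trans\mathbf{b}<0$, you should use that $C$ is a cone containing $\mathbf{0}$, so that $\alpha<0$ and scaling $c$ by $\lambda\to\infty$ forces $\mathbf{y}\trans c\ge 0$. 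This is routine and does not affect the validity of the argument; your Fourier--Motzkin alternative would also work and avoids the topology entirely.
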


  \begin{proposition}\label{p:weight-score}
    Conjectures~\ref{snc:score-w} and \ref{snc:weight} are equivalent.
    Moreover, a digraph $D$ is a counterexample to Conjecture~\ref{snc:score-w} if and only if $\overleftarrow{D}$
    is a counterexample to Conjecture~\ref{snc:weight}.
  \end{proposition}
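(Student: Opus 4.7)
The plan is to translate both failure conditions into systems of linear (in)equalities and invoke Farkas' Lemma to switch between them. Since $S_{\overleftarrow{D}}=S_D\trans$, a counterexample to Conjecture~\ref{snc:weight} on $\overleftarrow{D}$ is a weight vector $\mathbf{w}'\ge\mathbf{0}$ satisfying $S_D\trans\mathbf{w}'>\mathbf{0}$, while a counterexample to Conjecture~\ref{snc:score-w} on $D$ is the non-existence of any non-zero weight vector $\mathbf{w}\ge\mathbf{0}$ with $S_D\mathbf{w}\le\mathbf{0}$. I will prove the ``moreover'' statement; applying it with $\overleftarrow{D}$ in place of $D$ and using $\overleftarrow{\overleftarrow{D}}=D$ then yields the full equivalence for free.

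The direction in which a counterexample to Conjecture~\ref{snc:weight} on $\overleftarrow{D}$ forces one to Conjecture~\ref{snc:score-w} on $D$ is the easy half. If both such $\mathbf{w}'$ and $\mathbf{w}$ existed, computing $\mathbf{w}'\trans S_D\mathbf{w}$ in two ways would give $0\ge\mathbf{w}'\trans(S_D\mathbf{w})=(S_D\trans\mathbf{w}')\trans\mathbf{w}>0$, a contradiction.

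For the reverse direction, I would first normalise the hypothesis to the statement that the augmented system $S_D\mathbf{w}+\mathbf{s}=\mathbf{0}$, $\mathbf{1}\trans\mathbf{w}=1$, $\mathbf{w}\ge\mathbf{0}$, $\mathbf{s}\ge\mathbf{0}$ is infeasible: scaling any non-zero candidate $\mathbf{w}$ so that $\mathbf{1}\trans\mathbf{w}=1$ handles non-triviality, and introducing a slack $\mathbf{s}$ turns $S_D\mathbf{w}\le\mathbf{0}$ into an equality. I would then apply Farkas to the block matrix $M=\bigl(\begin{smallmatrix}S_D & I\\ \mathbf{1}\trans & 0\end{smallmatrix}\bigr)$ and right-hand side $\mathbf{b}=(\mathbf{0},1)\trans$, obtaining a vector $(\mathbf{y}_1\trans,y_2)\trans$ with $\mathbf{y}_1\ge\mathbf{0}$, $S_D\trans\mathbf{y}_1+y_2\mathbf{1}\ge\mathbf{0}$, and $y_2<0$. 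Rescaling $\mathbf{w}'=\mathbf{y}_1/(-y_2)$ then gives a weight vector satisfying $S_D\trans\mathbf{w}'\ge\mathbf{1}>\mathbf{0}$, exhibiting $\overleftarrow{D}$ as a counterexample to Conjecture~\ref{snc:weight}.

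The step requiring the most care is the setup of the Farkas system so that the mixed strict/non-strict conditions ``$\mathbf{w}\ne\mathbf{0}$'' and ``$S_D\mathbf{w}\le\mathbf{0}$'' are captured simultaneously as an equality system in nonnegative variables. The affine constraint $\mathbf{1}\trans\mathbf{w}=1$ is what encodes non-triviality, and it is precisely this constraint that forces the dual multiplier $y_2$ to be \emph{strictly} negative; without it, Farkas would deliver only $S_D\trans\mathbf{w}'\ge\mathbf{0}$, too weak to exhibit a counterexample to Conjecture~\ref{snc:weight}.
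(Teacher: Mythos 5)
Your proposal is correct and follows essentially the same route as the paper: the same augmented block matrix $M=\bigl[\begin{smallmatrix}S_D & I\\ \mathbf{1}\trans & \mathbf{0}\trans\end{smallmatrix}\bigr]$ with right-hand side $\mathbf{b}=(\mathbf{0},1)\trans$, the same normalization $\mathbf{1}\trans\mathbf{w}=1$ and slack variables, and the same application of Farkas' Lemma to pass to $S_D\trans\mathbf{w}'>\mathbf{0}$. The only cosmetic difference is that you handle the easy direction by the bilinear pairing $\mathbf{w}'\trans S_D\mathbf{w}$, whereas the paper simply runs its chain of equivalences in both directions.
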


  \begin{proof}
    Suppose $D$ is digraph on $n$ vertices.
    Construct a new matrix $M$ with $n+1$ rows and $2n$ columns by assembling together
    smaller matrices, as follows:
    $$M =
      \left[\begin{array}{c|c}
        S_D & I \\
      \hline
      \mathbf{1}\trans & \mathbf{0}\trans 
      \end{array}\right],$$
    and let $\mathbf{b}$ be the $(n+1)$-dimensional standard basis vector $[0,0,\dots,0,1]\trans$.

    For the remainder of the proof, we present a list of statements (1)--(9) that are equivalent to one another.
    It is easy to see that consecutive statements are equivalent, and we remark that the equivalence
    between (5) and (6) follows from Theorem~\ref{t:farkas}.
    \begin{enumerate}
      \item
        Digraph $D$ is a counterexample to Conjecture~\ref{snc:score-w}.
      \item The following system fails for every $n$-dimensional vector $\mathbf{w}$.
        $$\begin{cases}
          S_D \mathbf{w} \le \mathbf{0}\\
          \mathbf{w} \ge \mathbf{0}\\
          \mathbf{w} \neq \mathbf{0}
        \end{cases}$$
      \item The following system fails for every $n$-dimensional vector $\mathbf{u}$.
        $$\begin{cases}
          S_D \mathbf{u} \le \mathbf{0}\\
          \mathbf{u} \ge \mathbf{0}\\
          \mathbf{1}\trans\mathbf{u}=1
        \end{cases}$$
      \item
        The following system fails for every two $n$-dimensional vectors $\mathbf{u}$ and $\mathbf{z}$.
        $$\begin{cases}
          S_D \mathbf{u}+\mathbf{z} = \mathbf{0}\\
          \mathbf{z}\ge \mathbf{0}\\
          \mathbf{u}\ge \mathbf{0}\\
          \mathbf{1}\trans\mathbf{u}=1
        \end{cases}$$
      \item
        The following system fails for every $2n$-dimensional vector $\mathbf{x}$.
        $$\begin{cases}
          M\mathbf{x}=\mathbf{b}\\
          \mathbf{x}\ge\mathbf{0}
        \end{cases}$$
        \item
        There is an $(n+1)$-dimensional vector $\mathbf{y}$ that satisfies the following system.
        $$\begin{cases}
          M\trans \mathbf{y}\ge \mathbf{0}\\
          \mathbf{b}\trans \mathbf{y}<0
        \end{cases}$$
        \item
          There are an $n$-dimensional vector $\mathbf{p}$ and a scalar $r$ that satisfy the following system.
          $$\begin{cases}
            S_D\trans\mathbf{p}+r\mathbf{1}\ge \mathbf{0}\\
            \mathbf{p}\ge\mathbf{0}\\
            r<0
          \end{cases}$$
        \item
          There is an $n$-dimensional vector $\mathbf{p}$ that satisfies the following system.
          $$\begin{cases}
            S_D\trans\mathbf{p} > \mathbf{0}\\
            \mathbf{p}\ge\mathbf{0}\\
          \end{cases}$$
        \item
          Digraph $\overleftarrow{D}$ is a counterexample to Conjecture~\ref{snc:weight}.
    \end{enumerate}
  \end{proof}

  Next we show that Conjectures~\ref{snc:score-v} and~\ref{snc:inverse} are equivalent.

  \begin{proposition} \label{p:sv-in}
    Conjectures \ref{snc:score-v} and \ref{snc:inverse} are equivalent, with the same set of counterexamples.
  \end{proposition}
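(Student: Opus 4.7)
The plan is to prove something slightly stronger than the statement — that a single digraph $D$ is a counterexample to Conjecture~\ref{snc:score-v} if and only if it is a counterexample to Conjecture~\ref{snc:inverse} — which immediately gives both the equivalence and the coincidence of counterexample sets. Throughout, I would exploit just two elementary facts: multiplication by a nonnegative matrix preserves the relation ``$\le$'', and the $i$-th column of $S_D^{-1}$ is $S_D^{-1}\mathbf{e}_i$ where $\mathbf{e}_i$ is the $i$-th standard basis vector.

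For the direction ``counterexample to \ref{snc:inverse} implies counterexample to \ref{snc:score-v},'' I would assume $S_D^{-1}$ exists and $S_D^{-1}\ge\mathbf{0}$. Given any vector $\mathbf{v}$ with $S_D\mathbf{v}\le\mathbf{0}$, left-multiplying by the nonnegative matrix $S_D^{-1}$ preserves the inequality and yields $\mathbf{v}\le\mathbf{0}$. Hence no $\mathbf{v}$ with a positive component can satisfy $S_D\mathbf{v}\le\mathbf{0}$, and $D$ is a counterexample to Conjecture~\ref{snc:score-v}.

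For the converse, suppose $D$ is a counterexample to Conjecture~\ref{snc:score-v}. I must first establish that $S_D$ is invertible: otherwise pick a nonzero $\mathbf{v}$ in the kernel, and observe that both $\mathbf{v}$ and $-\mathbf{v}$ satisfy $S_D(\pm\mathbf{v})=\mathbf{0}\le\mathbf{0}$. Since $\mathbf{v}\neq\mathbf{0}$, at least one of $\mathbf{v}$, $-\mathbf{v}$ has a positive component, contradicting the hypothesis on $D$. Once invertibility is in hand, the nonnegativity of $S_D^{-1}$ is read off column by column: for each $i$, let $\mathbf{c}_i=S_D^{-1}\mathbf{e}_i$, and note $S_D(-\mathbf{c}_i)=-\mathbf{e}_i\le\mathbf{0}$; since $D$ is a counterexample to~\ref{snc:score-v}, the vector $-\mathbf{c}_i$ has no positive component, so $\mathbf{c}_i\ge\mathbf{0}$. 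Assembling the columns gives $S_D^{-1}\ge\mathbf{0}$.

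There is no serious obstacle in this argument. The only nontrivial sub-step is the preliminary verification of invertibility of $S_D$, which is needed because Conjecture~\ref{snc:inverse} implicitly requires $S_D^{-1}$ to exist; but even this reduces to the same ``exactly one of $\mathbf{v}$ and $-\mathbf{v}$ has a positive component'' trick used in the main column-by-column argument.
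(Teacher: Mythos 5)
Your proposal is correct and takes essentially the same approach as the paper: both arguments handle non-invertibility of $S_D$ via the observation that one of $\pm\mathbf{v}$ for a kernel vector $\mathbf{v}$ witnesses Conjecture~\ref{snc:score-v}, and both reduce the invertible case to the fact that $S_D^{-1}\ge\mathbf{0}$ is equivalent to the implication $S_D\mathbf{w}\le\mathbf{0}\Rightarrow\mathbf{w}\le\mathbf{0}$. The only cosmetic difference is that you verify nonnegativity of $S_D^{-1}$ column by column on the standard basis vectors, whereas the paper phrases the same step as order-preservation of the inverse linear map.
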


  \begin{proof}
    Let $D$ be a digraph, and suppose first that the matrix $S_D$ is not invertible.
    Then there is a non-zero vector $\mathbf{u}$ such that $S_D\mathbf{u}=\mathbf{0}$.
    If $\mathbf{u}$ has a positive component, then let $\mathbf{v}=\mathbf{u}$;
    otherwise let $\mathbf{v}=-\mathbf{u}$.
    Then $\mathbf{v}$ testifies to the fact that $D$ satisfies Conjecture~\ref{snc:score-v}.
    Also, $D$ vacuously satisfies Conjecture~\ref{snc:inverse}, and so both conjectures hold
    for digraphs with non-invertible second-neighborhood matrices.

    Suppose now that $S_D$ is invertible,
    and let $\sigma_D$ be the map defined by
    $\sigma_D\colon\mathbf{w}\mapsto S_D\mathbf{w}$.

    Consider the statement:
    \begin{enumerate}
      \item
         Digraph $D$ is a counterexample to Conjecture~(\ref{snc:score-v}).
    \end{enumerate}
    It is equivalent to the statement that
    no vector $\mathbf{w}$ satisfies both $S_D\mathbf{w}\le\mathbf{0}$ and
    $\mathbf{w}\not\le\mathbf{0}$, which, in turn, is equivalent to the statement:
    \begin{enumerate}
      \item[(2)]
      If $\sigma_D(\mathbf{w})\le\mathbf{0}$, then $\mathbf{w}\le\mathbf{0}$.
    \end{enumerate}
    Since $S_D$ is invertible, $\sigma_D$ is bijective and thus has an inverse,
    and so statement~(2) is equivalent to the following:
    \begin{enumerate}
      \item[(3)]
      If $\mathbf{w}\le\mathbf{0}$, then $\sigma_D^{-1}(\mathbf{w})\le\mathbf{0}$.
    \end{enumerate}
    Note that $\sigma_D$ is also linear, and so~(3) is equivalent to the statement:
    \begin{enumerate}
      \item[(4)]
      If $\mathbf{w}\ge\mathbf{0}$, then $\sigma_D^{-1}(\mathbf{w})\ge\mathbf{0}$.
    \end{enumerate}
    Now, we observe that $\sigma_D^{-1}(\mathbf{w})=S_D^{-1}\mathbf{w}$, and so~(4) may
    be restated as:
    \begin{enumerate}
      \item[(5)]
      $S^{-1}_D\mathbf{w}\ge \mathbf{0}$ for every vector $\mathbf{w}\ge \mathbf{0}$.
    \end{enumerate}
    The last statement holds if and only if every entry of $S^{-1}_D$ is non-negative.
  \end{proof} 

The last equivalence is established in the next section.

\section{Counterexamples}

  In this section, we will compare the various sets of potential counterexamples to the conjectures
  discussed in this paper.

  For each $N$ in $\{\ref{snc:matrix}, \ref{snc:score-w}, \ref{snc:score-v}\}$,
  let $\mathscr{X}_N$ denote the set of counterexamples to Conjecture~$N$,
  and let $\overleftarrow{\mathscr{X}}_N=\{\overleftarrow{D} \mid D\in \mathscr{X}_N\}$.
  Intuitively, we may think of each $\overleftarrow{\mathscr{X}}_N$ as the set of
  counterexamples to ``Conjecture~$N$ stated for in-neighbors''.
  
  The first proposition comparing the above sets of counterexamples is an immediate consequence
  of the statements of the conjectures, so it is stated without proof.
  
  \begin{proposition}\label{p:x22-x23}
    $\mathscr{X}_{\ref{snc:score-v}} \subseteq \mathscr{X}_{\ref{snc:score-w}}$ and
    $\overleftarrow{\mathscr{X}}_{\ref{snc:score-v}} \subseteq \overleftarrow{\mathscr{X}}_{\ref{snc:score-w}}$.
  \end{proposition}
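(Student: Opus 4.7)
The plan is a direct definitional argument; no machinery beyond the statements of the two conjectures is needed. First I would observe that any non-zero weight vector $\mathbf{w}$ on $D$ is, by the very definition of a weight function, non-negative and not identically zero, and therefore has at least one strictly positive component. Hence every $\mathbf{w}$ that witnesses the conclusion of Conjecture~\ref{snc:score-w} for $D$ (non-zero, weight, with $S_D\mathbf{w}\le\mathbf{0}$) is simultaneously a vector with a positive component satisfying $S_D\mathbf{w}\le\mathbf{0}$, i.e., a witness for Conjecture~\ref{snc:score-v}.

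Taking contrapositives immediately gives the first inclusion: if $D\in\mathscr{X}_{\ref{snc:score-v}}$, then $D$ admits no vector $\mathbf{v}$ with a positive component and $S_D\mathbf{v}\le\mathbf{0}$; in particular, it admits no non-zero weight vector with $S_D\mathbf{w}\le\mathbf{0}$, so $D\in\mathscr{X}_{\ref{snc:score-w}}$. The second inclusion follows by applying the same argument with $\overleftarrow{D}$ in place of $D$ and then translating through the definition $\overleftarrow{\mathscr{X}}_N=\{\overleftarrow{D}\mid D\in\mathscr{X}_N\}$.

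The only point that deserves a moment's care is the asymmetry between the two witness classes: the class of non-zero weight vectors is a proper subset of the class of vectors having at least one positive component, so one must argue the inclusion in the correct direction (a \emph{smaller} witness class yields a \emph{larger} counterexample class). With this observation in place there is no real obstacle, which is consistent with the paper's description of the result as an immediate consequence of the conjecture statements.
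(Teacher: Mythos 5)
Your argument is correct and is precisely the ``immediate consequence of the statements of the conjectures'' that the paper invokes when it omits the proof: a non-zero weight vector is automatically a vector with a positive component, so every witness for Conjecture~\ref{snc:score-w} is a witness for Conjecture~\ref{snc:score-v}, and the contrapositive gives both inclusions. No further comment is needed.
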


  The next proposition is almost as obvious.

  \begin{proposition}\label{p:x21-x'23}
    $\mathscr{X}_{\ref{snc:matrix}} \subseteq \overleftarrow{\mathscr{X}}_{\ref{snc:score-w}}$.
  \end{proposition}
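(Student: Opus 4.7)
The plan is a direct transpose-and-sum argument: exploit the identity $\mathbf{1}\trans S_D\trans \mathbf{w} = (S_D\mathbf{1})\trans \mathbf{w}$ together with the strict positivity built into $S_D\mathbf{1}>\mathbf{0}$.

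First I would unpack the goal in matrix terms. Suppose $D\in\mathscr{X}_{\ref{snc:matrix}}$, so that $S_D\mathbf{1}>\mathbf{0}$ strictly componentwise. To place $D$ in $\overleftarrow{\mathscr{X}}_{\ref{snc:score-w}}$ I must show $\overleftarrow{D}\in\mathscr{X}_{\ref{snc:score-w}}$, i.e., that no non-zero weight vector $\mathbf{w}$ satisfies $S_{\overleftarrow{D}}\mathbf{w}\le\mathbf{0}$. Since $S_{\overleftarrow{D}}=S_D\trans$, the task reduces to ruling out the existence of a non-zero $\mathbf{w}\ge\mathbf{0}$ with $S_D\trans\mathbf{w}\le\mathbf{0}$.

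Assume for contradiction that such a $\mathbf{w}$ exists, and evaluate the scalar $\mathbf{1}\trans S_D\trans \mathbf{w}$ two different ways. Read as $\mathbf{1}\trans(S_D\trans\mathbf{w})$, it is a sum of nonpositive entries and is therefore nonpositive. Read instead as $(S_D\mathbf{1})\trans\mathbf{w}=\sum_i (S_D\mathbf{1})_i\, w_i$, every coefficient $(S_D\mathbf{1})_i$ is strictly positive while every $w_i$ is non-negative, and at least one $w_i$ is strictly positive (because $\mathbf{w}$ is a non-zero weight vector); the sum is therefore strictly positive, and this is the contradiction.

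I anticipate no substantive obstacle. The only subtlety is the careful interplay of strict and weak inequalities: it is precisely the strict positivity $S_D\mathbf{1}>\mathbf{0}$, paired with the ``non-zero'' clause on $\mathbf{w}$, that forces the inner product to be strictly positive. This is the companion to Proposition~\ref{p:x22-x23}, and is ``almost as obvious'' in the same sense.
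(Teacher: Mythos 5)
Your proof is correct, but it takes a different route from the paper's. The paper's proof is a two-line reduction: a counterexample to Conjecture~\ref{snc:matrix} is trivially a counterexample to Conjecture~\ref{snc:weight} (take $\mathbf{w}=\mathbf{1}$), and then Proposition~\ref{p:weight-score} --- whose proof runs through Farkas' Lemma --- hands over the conclusion that $\overleftarrow{D}$ is a counterexample to Conjecture~\ref{snc:score-w}. You instead bypass that machinery entirely with a direct weak-duality computation: evaluating $\mathbf{1}\trans S_D\trans\mathbf{w}$ both as $\mathbf{1}\trans(S_D\trans\mathbf{w})\le 0$ and as $(S_D\mathbf{1})\trans\mathbf{w}>0$. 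This is in effect the ``easy half'' of Farkas' Lemma (the two alternatives cannot hold simultaneously), unrolled and instantiated, so your argument is self-contained and more elementary, whereas the paper's is shorter only because Proposition~\ref{p:weight-score} has already been established. One gain of your version is that it makes transparent exactly where strictness is needed ($S_D\mathbf{1}>\mathbf{0}$ paired with $\mathbf{w}\neq\mathbf{0}$, $\mathbf{w}\ge\mathbf{0}$); the paper's version has the advantage of exposing that the containment is really one direction of the equivalence between Conjectures~\ref{snc:weight} and~\ref{snc:score-w}. Both proofs are valid.
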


  \begin{proof}
    Suppose $D \in \mathscr{X}_{\ref{snc:matrix}}$.
    It is obvious that $D$ is also a counterexample to Conjecture~\ref{snc:weight}, and
    Proposition~\ref{p:weight-score} asserts that $\overleftarrow{D}$ is a counterexample to Conjecture~\ref{snc:score-w}, as well;
    the conclusion follows.
  \end{proof}

  \begin{lemma} \label{l:x22-x'23}
    Every minimal element of $\mathscr{X}_{\ref{snc:score-w}}$ is a member of $\mathscr{X}_{\ref{snc:inverse}}$.
  \end{lemma}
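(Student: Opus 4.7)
Let $D$ be a minimal element of $\mathscr{X}_{\ref{snc:score-w}}$. By Proposition~\ref{p:sv-in}, the sets $\mathscr{X}_{\ref{snc:inverse}}$ and $\mathscr{X}_{\ref{snc:score-v}}$ coincide, so it suffices to prove $D\in\mathscr{X}_{\ref{snc:score-v}}$, i.e.\ that no vector with at least one positive component satisfies $S_D\mathbf{v}\le\mathbf{0}$. I will argue by contradiction: suppose such a $\mathbf{v}$ exists. Since $D$ itself lies in $\mathscr{X}_{\ref{snc:score-w}}$, no non-zero non-negative weight vector can serve as a witness for Conjecture~\ref{snc:score-w}, so $\mathbf{v}$ cannot be non-negative, and consequently it has a negative component too. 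Decompose $\mathbf{v}=\mathbf{v}^{+}-\mathbf{v}^{-}$ with $\mathbf{v}^{+},\mathbf{v}^{-}\ge\mathbf{0}$ non-zero and of disjoint support, so that $S_D\mathbf{v}^{+}\le S_D\mathbf{v}^{-}$.

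The plan is to use the hypothetical $\mathbf{v}$ together with the minimality of $D$ to produce an arc $e\in E(D)$ and a non-zero non-negative vector $\mathbf{w}$ with $S_D\mathbf{w}\le\mathbf{0}$, contradicting $D\in\mathscr{X}_{\ref{snc:score-w}}$. By minimality, for every arc $e=(v_a,v_b)\in E(D)$ the digraph $D-e$ fails to be a counterexample, yielding a non-zero non-negative $\mathbf{w}_e$ with $S_{D-e}\mathbf{w}_e\le\mathbf{0}$. A direct entry-wise comparison of $S_D$ with $S_{D-e}$ shows that the only rows in which $(S_D-S_{D-e})\mathbf{w}_e$ can have a strictly positive entry form a small controlled set: every row other than the $a$th is non-positive, and the $a$th row evaluates to $c\cdot(\mathbf{w}_e)_b-\sum_{v_l\in V_{il}}(\mathbf{w}_e)_l$, where $c\in\{1,2\}$ depends on whether a length-$2$ path from $v_a$ to $v_b$ survives and $V_{il}$ is the set of those $v_l\neq v_b$ for which the only length-$2$ path from $v_a$ went through the arc $e$. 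Therefore, if one can locate an arc $e$ and a corresponding $\mathbf{w}_e$ with $(\mathbf{w}_e)_b=0$, then $S_D\mathbf{w}_e\le\mathbf{0}$ follows at once, giving the desired contradiction.

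The main obstacle is arranging the vanishing condition $(\mathbf{w}_e)_b=0$, and this is where the assumed vector $\mathbf{v}$ must enter: the natural move is to select $e$ so that $v_b$ lies in the support of $\mathbf{v}^{-}$, and then to combine the minimality witness $\mathbf{w}_e$ with $\mathbf{v}^{+}$ or $\mathbf{v}^{-}$ to kill the $b$-coordinate while preserving non-negativity and the inequality $S_{D-e}\mathbf{w}\le\mathbf{0}$. Equivalently, one may invoke the dual characterization implicit in step~(8) of the proof of Proposition~\ref{p:weight-score} combined with Proposition~\ref{p:matrix-weight}, namely that a digraph is in $\mathscr{X}_{\ref{snc:score-w}}$ iff there is a strictly positive $\mathbf{p}$ with $S_D\trans\mathbf{p}>\mathbf{0}$, and track how such a $\mathbf{p}$ transfers to $D-e$; the inner product identity $\mathbf{p}\trans S_D\mathbf{v}\le 0$ together with $S_D\trans\mathbf{p}>\mathbf{0}$ forces strong structural constraints on the interaction between $\mathbf{p}$ and the supports of $\mathbf{v}^{\pm}$, and these constraints should pinpoint an arc whose removal preserves the strict positivity of $S_{D-e}\trans\mathbf{p}$, again contradicting minimality. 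The bulk of the technical work lies in turning this intuition into the explicit selection of the arc and the accompanying adjustment of the weight vector.
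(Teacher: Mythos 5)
Your proposal does not close: it is a plan whose decisive step is left as ``intuition to be turned into an explicit selection,'' and that step is precisely where the difficulty of the lemma lives. Your analysis of how $S_{D-e}$ differs from $S_D$ for an arc $e=(v_a,v_b)$ is correct (only column $b$ changes outside row $a$, and in row $a$ the only positive contribution is $c\,(\mathbf{w}_e)_b$ with $c\in\{1,2\}$), so you are right that an arc $e$ together with a witness $\mathbf{w}_e$ for $D-e$ satisfying $(\mathbf{w}_e)_b=0$ would yield $S_D\mathbf{w}_e\le\mathbf{0}$ and a contradiction. But you never produce such a pair. The mechanism you suggest --- ``combine $\mathbf{w}_e$ with $\mathbf{v}^{+}$ or $\mathbf{v}^{-}$ to kill the $b$-coordinate'' --- does not work as stated: you only know $S_D\mathbf{v}\le\mathbf{0}$, not $S_{D-e}\mathbf{v}\le\mathbf{0}$ (these differ exactly in the entries affected by deleting $e$), so adding a multiple of $\mathbf{v}$ to $\mathbf{w}_e$ need not preserve $S_{D-e}(\cdot)\le\mathbf{0}$; and subtracting to zero out coordinate $b$ threatens non-negativity elsewhere. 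The dual route via a $\mathbf{p}\ge\mathbf{0}$ with $S_D\trans\mathbf{p}>\mathbf{0}$ fares no better: since $\mathbf{v}$ has mixed signs, the identity $\mathbf{p}\trans S_D\mathbf{v}=(S_D\trans\mathbf{p})\trans\mathbf{v}$ yields no immediate contradiction, and you do not extract the ``strong structural constraints'' you allude to. As written, the argument establishes nothing beyond the (correct) bookkeeping for arc deletion.

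For contrast, the paper's proof proceeds quite differently and is constructive. It deletes \emph{vertices} rather than arcs: for each $v_i$, minimality gives a witness $\mathbf{w}_i$ for $D-v_i$, extended by $\hat{\mathbf{w}}_i(v_i)=0$; the matrix $C=S_D\hat{W}$ then has positive diagonal and non-positive off-diagonal entries (the diagonal must be positive, else $\hat{\mathbf{w}}_i$ itself would contradict $D\in\mathscr{X}_{\ref{snc:score-w}}$). A column-based Gauss--Jordan elimination using only non-negative column operations either produces a non-positive pivot --- which yields a non-negative, non-zero combination $\sum a_j\hat{\mathbf{w}}_j$ with $S_D\bigl(\sum a_j\hat{\mathbf{w}}_j\bigr)\le\mathbf{0}$, contradicting minimality --- or terminates with a non-negative $T$ satisfying $S_D\hat{W}T=I$, exhibiting $S_D^{-1}=\hat{W}T\ge\mathbf{0}$ directly. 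If you want to salvage your approach, you would need either to prove the existence of the arc--witness pair with $(\mathbf{w}_e)_b=0$, or to switch to the vertex-deletion and elimination argument.
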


  \begin{proof}
    Let $D$ be a minimal element of $\mathscr{X}_{\ref{snc:score-w}}$, and let $V(D) = \{v_1 , v_2 , \ldots, v_n \}$.
    By the minimality of $D$, for each $D - v_i $ there is a non-zero non-negative weight vector
    $\mathbf{w}_i$ satisfying $S_{D - v_i}\mathbf{w}_i \le \mathbf{0}$.
    We can extend $\mathbf{w}_i$ to a weight vector $\mathbf{\hat{w}}_i$ on $D$ by putting $\mathbf{\hat{w}}_i(v_i) = 0$. Note that $\mathbf{\hat{w}}_i(N^+(v_j)) - \mathbf{\hat{w}}_i(N^{++}(v_j)) \le 0$ for $j \neq i$.
    If for some $i$, the weight vector $\mathbf{\hat{w}}_i$ satisfies $S_D \mathbf{\hat{w}}_i \le \mathbf{0}$,
    then we reach a contradiction.
    Therefore, we may assume that $\mathbf{\hat{w}}_i(N^+(v_i)) - \mathbf{\hat{w}}_i(N^{++}(v_i)) > 0$ for all $i$.
    Let $\hat{W}$ be the ($n \times n$)-matrix whose $i$th column is $\mathbf{\hat{w}}_i$,
    and let $C=S_D\hat{W}$.
    Then the entries of $C$ may be expressed as
    $c_{ij} = {\mathbf{\hat{w}}}_j(N^+(v_i)) - {\mathbf{\hat{w}}}_j(N^{++}(v_i))$, which implies that 
    $c_{ij}$ is positive if and only if $i=j$.

    We use a process similar to the Gauss-Jordan elimination to turn $C$ into the identity matrix $I_n$.
    The only difference is that we work with columns instead of rows, so we do elementary column operations.
    If we are successful, the identity matrix $I_n$ may be expressed as $C$ multiplied on the right by an appropriate transformation matrix $T$, that is, $I_n=CT$.
    To be more precise, we do the following:
    \begin{enumerate}
      \item Start by putting $i=1$ and $X= (x_{ij})= C$.
      \item If $i>n$, then $X$ is equal to $I_n$. Exit.
      \item If $x_{ii} \le 0$, exit. Otherwise, add suitable multiples of the $i$th column of $X$ to other columns of $X$ to make the
        $i$th row of $X$ zero (except for $x_{ii}$).
      \item Divide the $i$th column by $x_{ii}$.
      \item Add 1 to $i$. Go to $(2)$.
    \end{enumerate}
    If during this process we get non-positive $i$th diagonal (that is, the algorithm exits through step (3) because $x_{ii} \le 0$),
    then a non-negative, non-zero linear combination of
    $S_D\hat{\mathbf{w}}_1$, $S_D\hat{\mathbf{w}}_2$, \dots,~$S_D\hat{\mathbf{w}}_i$ is non-positive, say,
    $$a_1 S_D\hat{\mathbf{w}}_1 +a_2 S_D\hat{\mathbf{w}}_2 + \cdots + a_i S_D\hat{\mathbf{w}}_i \le \mathbf{0}.$$
    This is equivalent to $S_D \left( a_1\hat{\mathbf{w}}_1 +a_2\hat{\mathbf{w}}_2 + \cdots + a_i\hat{\mathbf{w}}_i \right) \le \mathbf{0}$,
    which contradicts the fact that $D \in \mathscr{X}_{\ref{snc:score-w}}$.
    Therefore the procedure described above never results in the matrix $X$ having a non-positive entry on the main diagonal,
    so the algorithm never exits through step~(3), and always exits through step~(2) instead, giving us the identity matrix~$I_n$.
    Note that in this process, we only add non-negative multiples of a column to other columns.
    This means that the elementary matrices associated with the matrix operations are all non-negative, therefore their product $T$ is also non-negative.
    Let $W' = \hat{W}T$, let $\mathbf{w}'_i$ be the $i$th column of $W'$, and let $\mathbf{e}_i$ be the $i$th column of $I_n$, that is, the $i$th $n$-dimensional standard basis vector.
    Then $W'$ is non-negative.
    We have
    $$I_n = CT = S_D \hat{W}T = S_D W'.$$
This means that $S_D$ has non-negative inverse, so $D \in \mathscr{X}_{\ref{snc:inverse}}$, as required.
  \end{proof}

  Now we are ready to provide the last part of the proof of Theorem~\ref{t:equiv}.

  \begin{proposition} \label{p38}
    Conjectures \ref{snc:score-w} and \ref{snc:score-v} are equivalent.
  \end{proposition}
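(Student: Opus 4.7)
The plan is to prove the two implications separately, leaning heavily on the machinery already assembled earlier in the section.

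The forward implication, that Conjecture~\ref{snc:score-w} implies Conjecture~\ref{snc:score-v}, is essentially content-free: a non-zero weight vector is by definition non-negative with at least one positive coordinate, so any witness $\mathbf{w}$ for Conjecture~\ref{snc:score-w} on a digraph $D$ also witnesses Conjecture~\ref{snc:score-v} on $D$. This is precisely the inclusion $\mathscr{X}_{\ref{snc:score-v}}\subseteq\mathscr{X}_{\ref{snc:score-w}}$ recorded in Proposition~\ref{p:x22-x23}, and I would dispense with it in a single sentence.

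For the converse, I would argue the contrapositive. Assume some digraph is a counterexample to Conjecture~\ref{snc:score-w}, and choose one, call it $D$, that is minimal with respect to the number of arcs. Lemma~\ref{l:x22-x'23} then places $D$ in $\mathscr{X}_{\ref{snc:inverse}}$, and Proposition~\ref{p:sv-in} asserts $\mathscr{X}_{\ref{snc:inverse}}=\mathscr{X}_{\ref{snc:score-v}}$ (with the same set of counterexamples). Hence $D$ also witnesses the failure of Conjecture~\ref{snc:score-v}. In other words, the proof is just the chain of inclusions
\[
\{\text{minimal elements of }\mathscr{X}_{\ref{snc:score-w}}\}\subseteq\mathscr{X}_{\ref{snc:inverse}}=\mathscr{X}_{\ref{snc:score-v}}\subseteq\mathscr{X}_{\ref{snc:score-w}},
\]
combined with the observation that $\mathscr{X}_{\ref{snc:score-w}}$ is non-empty whenever it contains any digraph at all, hence contains a minimal one.

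The main obstacle has already been handled by Lemma~\ref{l:x22-x'23}, whose column-wise Gauss--Jordan argument on the matrix of subgraph witnesses produces a non-negative right inverse for $S_D$ at any minimal counterexample; and by Proposition~\ref{p:sv-in}, which converts non-negativity of $S_D^{-1}$ into the existence of a vector with a positive coordinate mapped to $\mathbf{0}$ or below. Once those are available, the present proposition requires no further technical work, only a two-line assembly; so my proof would be deliberately brief.
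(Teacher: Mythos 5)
Your proposal is correct and follows essentially the same route as the paper: the forward implication is immediate, and the converse is obtained by taking a minimal counterexample to Conjecture~\ref{snc:score-w}, applying Lemma~\ref{l:x22-x'23} to place it in $\mathscr{X}_{\ref{snc:inverse}}$, and then invoking Proposition~\ref{p:sv-in}. No substantive differences from the paper's argument.
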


  \begin{proof}
    Clearly, Conjecture~\ref{snc:score-w} implies Conjecture~\ref{snc:score-v}.

    Suppose now that Conjecture~\ref{snc:score-w} fails, and so some digraph $D$ is a minimal element of 
    $\mathscr{X}_{\ref{snc:score-w}}$. 
    Lemma~\ref{l:x22-x'23} implies that $D\in \mathscr{X}_{\ref{snc:inverse}}$,
    so Conjecture~\ref{snc:inverse} fails.
    Proposition ~\ref{p:sv-in} now implies that Conjecture~\ref{snc:score-v} fails as well.
  \end{proof}

  The remainder of the paper is devoted to proving Theorem~\ref{t:conj}.
  Most of the work will be contained in the following:

  \begin{lemma}\label{l:min}
    If a digraph $D$ is a minimal member of $\mathscr{X}_{\ref{snc:score-w}}$,
    then $D\not\in\overleftarrow{\mathscr{X}}_{\ref{snc:matrix}}$.
  \end{lemma}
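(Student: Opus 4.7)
The plan is to argue by contradiction. Assume that $D$ is a minimal element of $\mathscr{X}_{\ref{snc:score-w}}$ that also lies in $\overleftarrow{\mathscr{X}}_{\ref{snc:matrix}}$; this translates to $S_D\trans\mathbf{1}>\mathbf{0}$, i.e., $d^{-}(v)>d^{--}(v)$ for every vertex $v$ of $D$. The goal is to produce a digraph with strictly fewer arcs that still lies in $\mathscr{X}_{\ref{snc:score-w}}$, contradicting the minimality of $D$.

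First I would invoke Lemma~\ref{l:x22-x'23} to obtain $S_D^{-1}\ge\mathbf{0}$, together with the matrix $\hat W$ whose $i$-th column is the extended weight vector $\hat{\mathbf{w}}_i$ (non-zero non-negative, with $\hat{\mathbf{w}}_i(v_i)=0$). With $C=S_D\hat W$ we then have $C_{ii}>0$, $C_{ij}\le 0$ for $i\neq j$, and $C^{-1}\ge\mathbf{0}$. The hypothesis combines with the identity $\mathbf{1}\trans C=(S_D\trans\mathbf{1})\trans\hat W$ to force every column sum of $C$ to be strictly positive, because every column of $\hat W$ is non-zero non-negative while $S_D\trans\mathbf{1}$ has all positive entries. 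This adds strict column diagonal dominance on top of the $M$-matrix structure already supplied by Lemma~\ref{l:x22-x'23}.

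The natural target is to find a vertex $v_i$ such that $S_{D-v_i}\trans\mathbf{1}>\mathbf{0}$; then $D-v_i\in\overleftarrow{\mathscr{X}}_{\ref{snc:matrix}}\subseteq\mathscr{X}_{\ref{snc:score-w}}$ (applying Proposition~\ref{p:x21-x'23} to the reverse), with strictly fewer arcs than $D$. The direct computation of the effects of deleting $v_i$ gives, for $j\neq i$,
\[
d^{-}_{D-v_i}(v_j)-d^{--}_{D-v_i}(v_j) \;=\; \bigl(d^{-}_D(v_j)-d^{--}_D(v_j)\bigr)-[v_i\to v_j]+[v_i\in N^{--}_D(v_j)]+|A(i,j)|,
\]
where $A(i,j)\subseteq N^{--}_D(v_j)\setminus\{v_i\}$ denotes the set of vertices whose every 2-path to $v_j$ passes through $v_i$. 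Under our hypothesis this quantity is always non-negative, and it equals zero exactly in the tight configuration $v_i\to v_j$ with $d^{-}_D(v_j)-d^{--}_D(v_j)=1$ and $A(i,j)=\emptyset$.

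The main obstacle is ruling out the adversarial case in which every vertex $v_i$ has at least one out-neighbor $v_j$ satisfying all three tight conditions simultaneously. I expect to close the argument by combining the column-sum inequality $\mathbf{1}\trans C>\mathbf{0}$ with the per-vertex identity $(S_D\trans\mathbf{1})\trans\hat{\mathbf{w}}_i=(\mathbf{1}\trans C)_i>0$, exploiting the precise sign pattern of the vectors $S_D\hat{\mathbf{w}}_i$ (positive in position $i$, non-positive elsewhere) and, if required, invoking arc-minimality to supply additional witnesses $\mathbf{w}^{e}$ for each $D-e$ whose non-negative aggregation forces a contradiction with $D\in\mathscr{X}_{\ref{snc:score-w}}$.
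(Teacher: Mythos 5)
Your reduction is sound as far as it goes: the translation of $D\in\overleftarrow{\mathscr{X}}_{\ref{snc:matrix}}$ into $d^-_D(v)>d^{--}_D(v)$ for every vertex $v$, the observation that a proper subdigraph lying in $\overleftarrow{\mathscr{X}}_{\ref{snc:matrix}}$ would contradict minimality via Proposition~\ref{p:x21-x'23}, and the bookkeeping formula for the effect of deleting a vertex are all correct. But the argument stops exactly where the difficulty begins. The case you call ``adversarial'' is in fact the generic one: for a minimal member of $\overleftarrow{\mathscr{X}}_{\ref{snc:matrix}}$ one has $d^-(v)-d^{--}(v)\in\{1,2\}$, and the paper's proof goes on to show that this difference equals $1$ for \emph{every} vertex. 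Consequently, for every arc $v_iv_j$ your three tight conditions collapse to the single requirement $A(i,j)=\varnothing$, and there is no visible reason why some $v_i$ should have $A(i,j)\neq\varnothing$ for all of its out-neighbors. The $M$-matrix structure and the positivity of the column sums of $C$ that you extract from Lemma~\ref{l:x22-x'23} are global statements about $S_D^{-1}$; they say nothing about the local, path-counting quantity $A(i,j)$, and you give no mechanism for converting one into the other. As written, the proposal is a plan whose key step is missing.

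The paper's proof takes a different and sharper route: it deletes a single \emph{arc} rather than a vertex. It first shows, by analyzing the effect of removing an arbitrary arc $xy$ and ruling out the intermediate case with an auxiliary weight vector, that $d^-_D(y)=d^{--}_D(y)+1$ for every vertex $y$. It then removes one arc $x_1y$ entering a vertex $y$ of maximum in-degree; minimality of $D$ in $\mathscr{X}_{\ref{snc:score-w}}$ yields a weight vector $\mathbf{w}'$ with $S_{D\setminus x_1y}\mathbf{w}'\le\mathbf{0}$, and summing these inequalities over all vertices (each $\mathbf{w}'(u)$ appearing $d^-(u)$ times on one side and $d^{--}(u)$ times on the other) forces every inequality to be an equality and $\mathbf{w}'(y)=\mathbf{w}'(V\setminus\{y\})$. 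A structural analysis of the support of $\mathbf{w}'$, combined with strong connectivity to produce a vertex at in-distance three from $y$, then delivers the contradiction. To salvage your vertex-deletion approach you would need an argument of comparable depth to guarantee a vertex $v_i$ with $A(i,j)\neq\varnothing$ for all out-neighbors $v_j$; the linear-algebraic facts you list do not supply it.
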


  \begin{proof}
    Suppose, for a contradiction, that $D$ is a minimal member of $\mathscr{X}_{\ref{snc:score-w}}$
    that also belongs to $\overleftarrow{\mathscr{X}}_{\ref{snc:matrix}}$.
    Since Proposition~\ref{p:x21-x'23} asserts that
    $\overleftarrow{\mathscr{X}}_{\ref{snc:matrix}} \subseteq \mathscr{X}_{\ref{snc:score-w}}$,
    we also have
    \begin{enumerate}
      \item[(1)]
      $D$ is a minimal element of $\overleftarrow{\mathscr{X}}_{\ref{snc:matrix}}$.
    \end{enumerate}

    The minimality of $D$ in $\overleftarrow{\mathscr{X}}_{\ref{snc:matrix}}$ implies that it is strongly connected, and
    the fact that $\overleftarrow{D}$ is a counterexample
    to SNC implies that the minimum in-degree of $D$ is at least two; in fact it is at least seven (see~\cite{KL}).

    Let $y$ be an arbitrary vertex of $D$, let $xy$ be an arc of $D$, and let $D'=D\setminus xy$.
    For a vertex $v$ of $D$, let $a(v)=d^-_D(v)-d^{--}_D(v)$ and let $a'(v)=d^-_{D'}(v)-d^{--}_{D'}(v)$.
    Note that $a(v) \le a'(v)$ whenever $v\neq y$.
    If $D$ has a directed path of length two from $x$ to $y$, then $a'(y)=a(y)-2$;
    otherwise $a'(y)=a(y)-1$.
    We show that
    \begin{enumerate}
      \item[(2)]
      $a'(y) = -1$ and $a'(v) \ge 1$ for $v \neq y$.
    \end{enumerate}

    It is not hard to see that $a(v) \in \{1,2 \}$; see~\cite{Brantner} for a justification.
    This means that $a'(y) \in \{-1,0,1 \}$ and $a'(v) \ge 1$ for $v \neq y$.
    In the case $a'(y) = 1$, we reach a contradiction with the minimality of $D$ in $\overleftarrow{\mathscr{X}}_{\ref{snc:matrix}}$.
    We will show that $a'(y) = 0$ cannot occur either.

    Suppose, for a contradiction, that $a'(y)=0$, and let $z$ be a vertex in $N^-_{D'}(y)$.
    We define a weight vector $\mathbf{u}$ on $D'$ as follows:
    $$\mathbf{u}(v) = 
      \begin{cases}
        1 & \text{if } v \ne z;\text{ and}\\
        \frac{3}{2} & \text{if } v=z;
      \end{cases}
    $$
    Now, we have $S_{D'}\trans\mathbf{u}>\mathbf{0}$, and an argument very similar to the proof of
    Proposition~\ref{p:weight-score} implies
    that $D'$ fails Conjecture~\ref{snc:score-w}, which contradicts the minimality of $D$ in $\mathscr{X}_{\ref{snc:score-w}}$.
    Thus we conclude that $a'(y)=-1$.

    Since $D \in \overleftarrow{\mathscr{X}}_{\ref{snc:matrix}}$, it satisfies $a(y) > 0$, and thus, it must be that $a(y)=1$,
    in other words,
    \begin{enumerate}
      \item[(3)]
      $d^-_D(y) = d^{--}_D(y)+1$.
    \end{enumerate}

    Let $y$ be a vertex of $D$ with the largest possible in-degree~$d$.
    Then~(3) implies that $d^{--}(y)=d-1$.
    Let $N^-(y)=X=\{x_1,x_2,\dots,x_d\}$ and let $N^{--}(y)=Z=\{z_1,z_2,\dots,z_{d-1}\}$.
    Consider the digraph $D'=D\setminus x_1y$ and note that the minimality of $D$ implies
    that there is a weight vector $\mathbf{w}'$ such that $S_{D'}\mathbf{w}'\le\mathbf{0}$.
    The last inequality is equivalent to stating that
    $\mathbf{w}'(N^+(u))\le\mathbf{w}'(N^{++}(u))$ for every vertex $u$ of $D'$,
    which, in turn, implies that
    \begin{equation*}
      \sum_{u\in V(D')}\mathbf{w}'(N^+_{D'}(u)) \le \sum_{u\in V(D')}\mathbf{w}'(N^{++}_{D'}(u))
    \end{equation*}
    Note that $\mathbf{w}'(u)$ appears $d^-_{D'}(u)$ times on the left side of the above inequality,
    while it appears $d^{--}_{D'}(u)$ times on the right side.
    By~(3), we have $d^-_{D'}(u) \ge d^{--}_{D'}(u)+1$ whenever $u\neq y$ and $d^-_{D'}(y) = d^{--}_{D'}(y)-1$,
    and so $\mathbf{w}'(y)\ge \mathbf{w}'(V\setminus\{y\})$.
    If $\mathbf{w}'(y) > \mathbf{w}'(V\setminus\{y\})$,
    then $\mathbf{w}'(N^{+}_{D'}(x_2)) > \mathbf{w}'(N^{++}_{D'}(x_2))$, which is impossible.
    It follows that $\mathbf{w}'(y) = \mathbf{w}'(V\setminus\{y\})$, which implies that
    $\mathbf{w}'(N^+_{D'}(u)) = \mathbf{w}'(N^{++}_{D'}(u))$ for every vertex $u$ of $D'$.

    Let $S = \{u\in V(D'): u\ne y \text{ and } \mathbf{w}'(u)>0\}$, and observe that
    $\mathbf{w}'(S) = \mathbf{w}'(y)$.
    Let $k=\mathbf{w}'(y)$, let $Z'=Z\cup \{x_1\}$, and let $X'=X\setminus \{x_1\}$.
    By construction, $N^{--}_{D'}(y)=Z'$, and so $y\in N^{++}_{D'}(z)$ for every $z\in Z'$.
    This implies that $\mathbf{w}'(N^{++}_{D'}(z))\ge k$, and, further, that
    $\mathbf{w}'(N^{++}_{D'}(z))=k=\mathbf{w}'(N^+(z))$.
    This means that $D$ has an arc $zs$ for every $z\in Z'$ and every $s\in S$.
    Since $y$ has the largest in-degree in $D$, we have $Z' = N^-(s)$ for every $s\in S$.

    Note that $y\not\in S$, and if $X'\cap S$ had an element $x$, then we would have
    $\mathbf{w}'(N^{++}(x))<k\le\mathbf{w}'(N^{+}(x))$, which is impossible;
    hence $X'\cap S=\varnothing$.
    Similarly, $Z'\cap S=\varnothing$.
    Therefore $( \{y \} \cup N^-(y) \cup N^{--}(y)) \cap S = \varnothing$.
    Since $D$ is strongly connected and $S$ is non-empty, $D$ has a vertex $t$ of in-distance three from~$y$,
    which, clearly, is in neither $X$ nor $Z$.
    Since all vertices in $S$ have $d$ in-neighbors in $X\cup Z$, there is no arc in $D$ in the form $ts$ with $s\in S$,
    and so $\mathbf{w}'(N^{+}(t))=0$.
    But every $z \in N^{--}(y)$ has arcs to all members of $S$, so $\mathbf{w}'(N^{++}(t)) \ge k$; a~contradiction.
  \end{proof}

  Finally, we are ready to prove Theorem~\ref{t:conj}

  \begin{proof}[Proof of Theorem~\ref{t:conj}]
    Suppose $D$ is a minimal counterexample to Conjecture~\ref{snc:matrix}.
    Then $\overleftarrow{D} \in \overleftarrow{\mathscr{X}}_{\ref{snc:matrix}}$.
    Lemma~\ref{p:x21-x'23} implies that
    $\overleftarrow{\mathscr{X}}_{\ref{snc:matrix}} \subseteq \mathscr{X}_{\ref{snc:score-w}}$, and so
    $\overleftarrow{D}$ is also a counterexample to Conjecture~\ref{snc:score-w}.
    If $\overleftarrow{D}$ were minimal, then Lemma~\ref{l:min} would imply that
    $D\notin\mathscr{X}_{\ref{snc:matrix}}$, which would be a contradiction.
  \end{proof}


\begin{thebibliography}{10}

\bibitem{Brantner}
J.~Brantner, G.~Brockman, B.~Kay, E.~Snively, Contributions to {S}eymour's {S}econd {N}eighbothood {C}onjecture, \emph{Involve} \textbf{2} (2009), 385--393.

\bibitem{Fisher}
D.~Fisher, Squaring a Tournament: A Proof of Dean’s Conjecture, \emph{J.~Graph Theory} \textbf{23} No.~1 (1996), 43--48. 

\bibitem{KL}
Y.~Kaneko, S.~Locke, The minimum degree approach for Paul Seymour's distance 2 conjecture. \emph{Proceedings of the Thirty-second Southeastern International Conference on Combinatorics, Graph Theory and Computing (Baton Rouge, LA, 2001)}. Congr. Numer. \textbf{148} (2001), 201--206. 

\end{thebibliography}
\end{document}